\numberwithin{equation}{section}
\newtheorem{Theorem}{Theorem}
\newtheorem{Lemma}[Theorem]{Lemma}
\newtheorem{Corollary}[Theorem]{Corollary}
\theoremstyle{definition}
\theoremstyle{remark}
\newtheorem{Remark}[Theorem]{Remark}
\newtheorem{Remarks}[Theorem]{Remarks}
\newtheorem{Example}[Theorem]{Example}
\def\P{{\mathcal P}}
\def\Z{\mathbb{Z}}
\def\GF{\operatorname{GF}}
\def\NE{\operatorname{NE}}
\newskip\Einheit \Einheit=0.6cm
\newdimen\xdim \newdimen\ydim \newdimen\PfadD@cke \newdimen\Pfadd@cke
\def\PfadDicke#1{\PfadD@cke#1 \divide\PfadD@cke by2 \Pfadd@cke\PfadD@cke \multiply\PfadD@cke by2}
\long\def\LOOP#1\REPEAT{\def\BODY{#1}\ITERATE}
\def\ITERATE{\BODY \let\next\ITERATE \else\let\next\relax\fi \next}
\let\REPEAT=\fi
\def\Punkt{\hbox{\raise-2pt\hbox to0pt{\hss\scriptsize$\bullet$\hss}}}
\def\FeinPunkt(#1,#2){\unskip
      \raise#2\Einheit\hbox to0pt{\hskip#1\Einheit\Punkt\hss}\hss}
\def\DuennPunkt(#1,#2){\unskip
  \raise#2 \Einheit\hbox to0pt{\hskip#1 \Einheit
          \raise-2.5pt\hbox to0pt{\hss\normalsize$\bullet$\hss}\hss}}
\def\NormalPunkt(#1,#2){\unskip
  \raise#2 \Einheit\hbox to0pt{\hskip#1 \Einheit
          \raise-3pt\hbox to0pt{\hss\large$\bullet$\hss}\hss}}
\def\DickPunkt(#1,#2){\unskip
  \raise#2 \Einheit\hbox to0pt{\hskip#1 \Einheit
          \raise-4pt\hbox to0pt{\hss\Large$\bullet$\hss}\hss}}
\def\Kreis(#1,#2){\unskip
  \raise#2 \Einheit\hbox to0pt{\hskip#1 \Einheit
          \raise-4pt\hbox to0pt{\hss\Large$\circ$\hss}\hss}}
\def\Diagonale(#1,#2)#3{\unskip\leavevmode
  \xcoord#1\relax \ycoord#2\relax
      \raise\ycoord \Einheit\hbox to0pt{\hskip\xcoord \Einheit
         \unitlength\Einheit
         \line(1,1){#3}\hss}}
\def\AntiDiagonale(#1,#2)#3{\unskip\leavevmode
  \xcoord#1\relax \ycoord#2\relax \advance\xcoord by -0.05\relax
      \raise\ycoord \Einheit\hbox to0pt{\hskip\xcoord \Einheit
         \unitlength\Einheit
         \line(1,-1){#3}\hss}}
\def\Pfad(#1,#2),#3\endPfad{\unskip\leavevmode
  \xcoord#1 \ycoord#2 \thicklines\ZeichnePfad#3\endPfad\thinlines}
\def\ZeichnePfad#1{\ifx#1\endPfad\let\next\relax
  \else\let\next\ZeichnePfad
    \ifnum#1=1
      \raise\ycoord \Einheit\hbox to0pt{\hskip\xcoord \Einheit
         \vrule height\Pfadd@cke width1 \Einheit depth\Pfadd@cke\hss}%
      \advance\xcoord by 1
    \else\ifnum#1=2
      \raise\ycoord \Einheit\hbox to0pt{\hskip\xcoord \Einheit
        \hbox{\hskip-1pt\vrule height1 \Einheit width\PfadD@cke depth0pt}\hss}%
      \advance\ycoord by 1
    \else\ifnum#1=3
      \raise\ycoord \Einheit\hbox to0pt{\hskip\xcoord \Einheit
         \unitlength\Einheit
         \line(1,1){1}\hss}
      \advance\xcoord by 1
      \advance\ycoord by 1
    \else\ifnum#1=4
      \raise\ycoord \Einheit\hbox to0pt{\hskip\xcoord \Einheit
         \unitlength\Einheit
         \line(1,-1){1}\hss}
      \advance\xcoord by 1
      \advance\ycoord by -1
    \fi\fi\fi\fi
  \fi\next}
\def\hSSchritt{\leavevmode\raise-0.4pt\hbox to0pt{\hss.\hss}\hskip0.2\Einheit
  \raise-0.4pt\hbox to0pt{\hss.\hss}\hskip0.2\Einheit
  \raise-0.4pt\hbox to0pt{\hss.\hss}\hskip0.2\Einheit
  \raise-0.4pt\hbox to0pt{\hss.\hss}\hskip0.2\Einheit
  \raise-0.4pt\hbox to0pt{\hss.\hss}\hskip0.2\Einheit}
\def\vSSchritt{\vbox{\baselineskip0.2\Einheit\lineskiplimit0pt
\hbox{.}\hbox{.}\hbox{.}\hbox{.}\hbox{.}}}
\def\DSSchritt{\leavevmode\raise-0.4pt\hbox to0pt{%
  \hbox to0pt{\hss.\hss}\hskip0.2\Einheit
  \raise0.2\Einheit\hbox to0pt{\hss.\hss}\hskip0.2\Einheit
  \raise0.4\Einheit\hbox to0pt{\hss.\hss}\hskip0.2\Einheit
  \raise0.6\Einheit\hbox to0pt{\hss.\hss}\hskip0.2\Einheit
  \raise0.8\Einheit\hbox to0pt{\hss.\hss}\hss}}
\def\dSSchritt{\leavevmode\raise-0.4pt\hbox to0pt{%
  \hbox to0pt{\hss.\hss}\hskip0.2\Einheit
  \raise-0.2\Einheit\hbox to0pt{\hss.\hss}\hskip0.2\Einheit
  \raise-0.4\Einheit\hbox to0pt{\hss.\hss}\hskip0.2\Einheit
  \raise-0.6\Einheit\hbox to0pt{\hss.\hss}\hskip0.2\Einheit
  \raise-0.8\Einheit\hbox to0pt{\hss.\hss}\hss}}
\def\SPfad(#1,#2),#3\endSPfad{\unskip\leavevmode
  \xcoord#1 \ycoord#2 \ZeichneSPfad#3\endSPfad}
\def\ZeichneSPfad#1{\ifx#1\endSPfad\let\next\relax
  \else\let\next\ZeichneSPfad
    \ifnum#1=1
      \raise\ycoord \Einheit\hbox to0pt{\hskip\xcoord \Einheit
         \hSSchritt\hss}%
      \advance\xcoord by 1
    \else\ifnum#1=2
      \raise\ycoord \Einheit\hbox to0pt{\hskip\xcoord \Einheit
        \hbox{\hskip-2pt \vSSchritt}\hss}%
      \advance\ycoord by 1
    \else\ifnum#1=3
      \raise\ycoord \Einheit\hbox to0pt{\hskip\xcoord \Einheit
         \DSSchritt\hss}
      \advance\xcoord by 1
      \advance\ycoord by 1
    \else\ifnum#1=4
      \raise\ycoord \Einheit\hbox to0pt{\hskip\xcoord \Einheit
         \dSSchritt\hss}
      \advance\xcoord by 1
      \advance\ycoord by -1
    \fi\fi\fi\fi
  \fi\next}
\def\Koordinatenachsen(#1,#2){\unskip
 \hbox to0pt{\hskip-0.5pt\vrule height#2 \Einheit width0.5pt depth1 \Einheit}%
 \hbox to0pt{\hskip-1 \Einheit \xcoord#1 \advance\xcoord by1
    \vrule height0.25pt width\xcoord \Einheit depth0.25pt\hss}}
\def\Koordinatenachsen(#1,#2)(#3,#4){\unskip
 \hbox to0pt{\hskip-0.5pt \ycoord-#4 \advance\ycoord by1
    \vrule height#2 \Einheit width0.5pt depth\ycoord \Einheit}%
 \hbox to0pt{\hskip-1 \Einheit \hskip#3\Einheit 
    \xcoord#1 \advance\xcoord by1 \advance\xcoord by-#3 
    \vrule height0.25pt width\xcoord \Einheit depth0.25pt\hss}}
\def\Gitter(#1,#2){\unskip \xcoord0 \ycoord0 \leavevmode
  \LOOP\ifnum\ycoord<#2
    \loop\ifnum\xcoord<#1
      \raise\ycoord \Einheit\hbox to0pt{\hskip\xcoord \Einheit\Punkt\hss}%
      \advance\xcoord by1
    \repeat
    \xcoord0
    \advance\ycoord by1
  \REPEAT}
\def\Gitter(#1,#2)(#3,#4){\unskip \xcoord#3 \ycoord#4 \leavevmode
  \LOOP\ifnum\ycoord<#2
    \loop\ifnum\xcoord<#1
      \raise\ycoord \Einheit\hbox to0pt{\hskip\xcoord \Einheit\Punkt\hss}%
      \advance\xcoord by1
    \repeat
    \xcoord#3
    \advance\ycoord by1
  \REPEAT}
\def\Label#1#2(#3,#4){\unskip \xdim#3 \Einheit \ydim#4 \Einheit
  \def\lo{\advance\xdim by-0.5 \Einheit \advance\ydim by0.5 \Einheit}%
  \def\llo{\advance\xdim by-0.25cm \advance\ydim by0.5 \Einheit}%
  \def\loo{\advance\xdim by-0.5 \Einheit \advance\ydim by0.25cm}%
  \def\o{\advance\ydim by0.25cm}%
  \def\ro{\advance\xdim by0.5 \Einheit \advance\ydim by0.5 \Einheit}%
  \def\rro{\advance\xdim by0.25cm \advance\ydim by0.5 \Einheit}%
  \def\roo{\advance\xdim by0.5 \Einheit \advance\ydim by0.25cm}%
  \def\l{\advance\xdim by-0.30cm}%
  \def\r{\advance\xdim by0.30cm}%
  \def\lu{\advance\xdim by-0.5 \Einheit \advance\ydim by-0.6 \Einheit}%
  \def\llu{\advance\xdim by-0.25cm \advance\ydim by-0.6 \Einheit}%
  \def\luu{\advance\xdim by-0.5 \Einheit \advance\ydim by-0.30cm}%
  \def\u{\advance\ydim by-0.30cm}%
  \def\ru{\advance\xdim by0.5 \Einheit \advance\ydim by-0.6 \Einheit}%
  \def\rru{\advance\xdim by0.25cm \advance\ydim by-0.6 \Einheit}%
  \def\ruu{\advance\xdim by0.5 \Einheit \advance\ydim by-0.30cm}%
  #1\raise\ydim\hbox to0pt{\hskip\xdim
     \vbox to0pt{\vss\hbox to0pt{\hss$#2$\hss}\vss}\hss}%
}
\begin{document}

\newbox\Adr
\setbox\Adr\vbox{
\centerline{\sc Sudhir R. Ghorpade$^{\ast}$ and  Christian~Krattenthaler$^{\dagger}$}
\vskip18pt
\centerline{Department of Mathematics,}
\centerline{Indian Institute of Technology Bombay,}
\centerline{Powai, Mumbai 400076, India.}
\centerline{{\it URL: } \footnotesize{\tt http://www.math.iitb.ac.in/\~{}srg}/}
\vskip10pt
\centerline{Fakult\"at f\"ur Mathematik, Universit\"at Wien}
\centerline{Oskar-Morgenstern-Platz~1, A-1090 Vienna, Austria.}
\centerline{{\it URL: }  \footnotesize{\tt
    http://www.mat.univie.ac.at/\lower0.5ex\hbox{\~{}}kratt}}} 

\newbox\aaa
\setbox\aaa\hbox{\large$a$}
\title[Computation of the $a$-invariant of ladder determinantal rings]
{Computation of the {\box\aaa}-invariant of ladder determinantal rings}

\author[S. R. Ghorpade and C.~Krattenthaler]{\box\Adr}

\thanks{$\ast$ Research partially supported by the Indo-Russian project INT/RFBR/P-114 from the Department of Science \& Technology, Govt. of India and the  IRCC Award grant 12IRAWD009 from IIT Bombay}

\thanks{$^\dagger$Research partially supported by the Austrian
Science Foundation FWF, grants Z130-N13 and S50-N15,
the latter in the framework of the Special Research Program
``Algorithmic and Enumerative Combinatorics"}

\begin{abstract} 
We solve the problem of effectively computing the $a$-invariant of ladder
determinantal rings. In the case of a one-sided ladder, we
provide a compact formula, while, for a large family of two-sided ladders,
we provide an algorithmic solution.
\end{abstract}

\keywords{$a$-invariant, ladder determinantal ring, Hilbert series, lattice path}

\subjclass[2010]{Primary 05A15, 13C40;
 Secondary 05A19, 13F50, 13H10}

\maketitle

\section{Introduction}
\label{sec:Intr}

Ladder determinantal rings are rings of polynomials in variables
$X_{i,j}$, $0\le i\le A$, $0\le j\le B$, 
modulo ideals generated by certain minors formed from
these variables 
 (see Section~\ref{sec:prelim} for the precise
definition). Ladder determinantal rings 
arose originally in the study of singularities of Schubert varieties
of flag manifolds  
by Abhyankar \cite{AbhyAB}. His work showed that ladder
determinantal rings are a  
natural generalization of determinantal rings corresponding to
classical determinantal ideals, and that they possess several nice
properties; for example, these rings are integral domains and are
rational in the sense that the quotient field is a purely
transcendental extension of the ground field. See also Narasimhan
\cite{NaraAA} for the former result. Ladder determinantal rings were
further studied by 
Herzog and Trung \cite{HeTrAA} who proved that these rings are
Cohen--Macaulay, using an explicit determination of the Gr\"obner basis
of the corresponding ladder determinantal ideal, and then showing that
the simplicial complex associated to its initial ideal is shellable.  
 Abhyankar and Kulkarni \cite{AbKuAC} have shown that the Hilbert
 function of ladder determinantal rings coincides with the Hilbert
 polynomial at all nonnegative integers.   
For more work on ladder determinantal rings, see
\cite{ConcAB, 
CoHeAB, GhorAC, 
GhorAF, KnMiAA,
KrPrAA, KrRuAA, KulkAD,
Wang}.

The purpose of the present paper is to provide methods for computing
the so-called $a$-invariant of ladder determinantal rings.
The $a$-invariant $a(R)$ is an important quantity associated with
a 
standard graded Cohen--Macaulay algebra $R$ over a field. It was introduced by 
Goto and Watanabe \cite{GoWaAA} as the negative of the least degree
of a generator of the graded canonical module of $R$. 
See \cite[p.~48]{ConcAD} for a summary of its various implications. 
In particular, it is argued there that it follows from work of
Stanley \cite{StanDA} that $a(R)=s-d$, given that the Hilbert
series of $R$ has the form $H(t)/(1-t)^d$, where 
$H(t)\in \Z[t]$ with $H(1)\ne 0$ and  $d$ is the Krull
dimension of $R$, while $s$ is the degree of 
$H(t)$. It is a classical result of Gr\"abe \cite{GraeAA} that, 
if $X = \left(X_{i,j}\right)$ 
is an $(A+1)\times (B+1)$matrix of variables, and $R$ the quotient of
the corresponding polynomial ring by the ideal generated by all
$(n+1)\times (n+1)$ minors of $X$, then  $a(R) = -\max\{A+1, B+1\}
n$. This result has been extended to weighted determinantal ideals and
Pfaffian ideals by Bruns and Herzog \cite{BrHeAA} and to ideals
cogenerated by a minor (and thus generated by minors of different 
sizes) of a rectangular matrix by Conca \cite{ConcAD} (see also
\cite[Theorem~4]{GhorAD}). The most  
general result appears to be that  of Conca \cite{ConcAD} on
determinantal rings (without ladder restriction). The case of ladder
determinantal rings appears to have been open and we take it up in
this paper.

Our first main result, consisting of 
Theorem~\ref{thm:1} and Corollary~\ref{cor:2}, provides a formula for
the $a$-invariant of one-sided ladder determinantal rings. It does not
reduce to Conca's formula in the special case where there is no
ladder restriction. Even in that case, our formula is simpler, as is
the proof of our formula. To explain the difference:
our proof follows Conca's in its first step, consisting of a
reduction of the problem to a problem of finding the largest set of
integer points in the plane satisfying certain properties (here, this
is hidden in the proof of Theorem~\ref{thm:3}; see 
\cite[Theorem~3.1]{RubeAC}), 
but differs fundamentally
from there on. While we translate these
point sets into families of non-intersecting lattice paths
(see Theorem~\ref{thm:3}), Conca
uses a version of the Robinson--Schensted--Knuth correspondence
in order to translate the point sets into pairs of semistandard
tableaux. As a matter of fact, the required analysis of the families of
non-intersecting lattice paths is much simpler than the corresponding
analysis of the pairs of tableaux. Moreover, the tableau approach
does not work in the presence of the ladder restriction.

Our second main result, consisting of 
Theorem~\ref{thm:2} and Corollary~\ref{cor:4}, provides an
algorithm for computing the $a$-invariant for a large family of 
two-sided ladder
determinantal rings. The idea behind this algorithm stems from
our result for the one-sided ladder case in Theorem~\ref{thm:1}.

The next section gives all necessary definitions and provides 
relevant background. In particular, it explains how the computation
of the $a$-invariant of ladder determinantal rings can be transformed
into the problem of counting non-intersecting lattice paths in 
ladder-shaped regions with a maximal total number of NE-turns,
see Theorem~\ref{thm:3}. In Section~\ref{sec:aux1} we then
solve the latter problem for {\it one-sided\/} ladder regions. The
resulting formula for the $a$-invariant of one-sided ladder
determinantal rings is presented in Section~\ref{sec:main1}.
The purpose of Section~\ref{sec:aux2} is to solve the problem
of counting non-intersecting lattice paths with a maximal total
number of NE-turns in {\it two-sided\/} ladder regions.
The corresponding result for the $a$-invariant of two-sided
ladder determinantal rings, which assumes a mild restriction
on the involved ladder region, is presented in Section~\ref{sec:main2}.

\section{Preliminaries}
\label{sec:prelim}

We start by recalling the definition of a ladder determinantal
ring. Let $K$ be a field and 
$X=(X_{i,j})_{0\le i\le A,\ 0\le j\le B}$ be an $(A+1)\times (B+1)$ 
matrix whose entries are independent indeterminates over $K$. Let
$Y=(Y_{i,j})_{0\le i\le A,\ 0\le j\le B}$ 
be another $(A+1)\times (B+1)$ matrix with the property that 
$Y_{i,j}=X_{i,j}$
or 0, and if $Y_{i,j}=X_{i,j}$ and $Y_{i'j'}=X_{i'j'}$, where $i\le i'$
and $j\le j'$, then $Y_{s,t}=X_{s,t}$ for all $s,t$ with $i\le s\le i'$
and $j\le t\le j'$. 
An example of such a matrix $Y$, 
with $A=15$ and $B=13$, is displayed in Figure~\ref{fig:4}.
Such a ``submatrix" $Y$ of $X$ is called a \emph{ladder}. This
terminology is motivated by the identification of such 
a matrix $Y$ with the set of all points $(j,A-i)$ in the
plane for which
$Y_{i,j}=X_{i,j}$. 
For example, the set of all such points for
the special matrix in Figure~\ref{fig:4} is shown in
Figure~\ref{fig:5}. (It should be apparent from comparison of
Figures~\ref{fig:4} and \ref{fig:5}
that the reason for taking $(j,A-i)$ instead of $(i,j)$ is to take
care of the difference in ``orientation" of row and column indexing of
a matrix versus coordinates in the plane.) 
In general, this set of
points looks like a (two-sided) ladder-shaped region. If, on the other
hand, we have either $Y_{0,0}=X_{0,0}$ or $Y_{a,b}=X_{a,b}$ then we
call $Y$ a {\it one-sided\/} ladder. In the first case we call $Y$ a
\emph{lower ladder}, in the second an \emph{upper ladder}.  Thus, the
matrix in Figure~\ref{fig:5a} is an upper ladder region (i.e.,
corresponds to a matrix $Y$ which is an upper ladder).

\begin{figure}[h]
\tiny
$$\Einheit.7cm
\left(
\hbox{\hskip.4cm}
\Label\o{ X\!\!_{1\!5,0}}(0,-8)
\Label\o{ X\!\!_{1\!5,1}}(1,-8)
\Label\o{ X\!\!_{1\!5,2}}(2,-8)
\Label\o{ X\!\!_{1\!5,3}}(3,-8)
\Label\o{ X\!\!_{1\!5,4}}(4,-8)
\Label\o{ X\!\!_{1\!5,5}}(5,-8)
\Label\o{ X\!\!_{1\!5,6}}(6,-8)
\Label\o{ X\!\!_{1\!5,7}}(7,-8)
\Label\o{ X\!\!_{1\!5,8}}(8,-8)
\Label\o{ 0}(9,-8)
\Label\o{ 0}(10,-8)
\Label\o{ 0}(11,-8)
\Label\o{ 0}(12,-8)
\Label\o{ 0}(13,-8)
\Label\o{ X\!\!_{1\!4,0}}(0,-7)
\Label\o{ X\!\!_{1\!4,1}}(1,-7)
\Label\o{ X\!\!_{1\!4,2}}(2,-7)
\Label\o{ X\!\!_{1\!4,3}}(3,-7)
\Label\o{ X\!\!_{1\!4,4}}(4,-7)
\Label\o{ X\!\!_{1\!4,5}}(5,-7)
\Label\o{ X\!\!_{1\!4,6}}(6,-7)
\Label\o{ X\!\!_{1\!4,7}}(7,-7)
\Label\o{ X\!\!_{1\!4,8}}(8,-7)
\Label\o{ 0}(9,-7)
\Label\o{ 0}(10,-7)
\Label\o{ 0}(11,-7)
\Label\o{ 0}(12,-7)
\Label\o{ 0}(13,-7)
\Label\o{ X\!\!_{1\!3,0}}(0,-6)
\Label\o{ X\!\!_{1\!3,1}}(1,-6)
\Label\o{ X\!\!_{1\!3,2}}(2,-6)
\Label\o{ X\!\!_{1\!3,3}}(3,-6)
\Label\o{ X\!\!_{1\!3,4}}(4,-6)
\Label\o{ X\!\!_{1\!3,5}}(5,-6)
\Label\o{ X\!\!_{1\!3,6}}(6,-6)
\Label\o{ X\!\!_{1\!3,7}}(7,-6)
\Label\o{ X\!\!_{1\!3,8}}(8,-6)
\Label\o{ X\!\!_{1\!3,9}}(9,-6)
\Label\o{ X\!\!_{1\!3,1\!0}}(10,-6)
\Label\o{ 0}(11,-6)
\Label\o{ 0}(12,-6)
\Label\o{ 0}(13,-6)
\Label\o{ X\!\!_{1\!2,0}}(0,-5)
\Label\o{ X\!\!_{1\!2,1}}(1,-5)
\Label\o{ X\!\!_{1\!2,2}}(2,-5)
\Label\o{ X\!\!_{1\!2,3}}(3,-5)
\Label\o{ X\!\!_{1\!2,4}}(4,-5)
\Label\o{ X\!\!_{1\!2,5}}(5,-5)
\Label\o{ X\!\!_{1\!2,6}}(6,-5)
\Label\o{ X\!\!_{1\!2,7}}(7,-5)
\Label\o{ X\!\!_{1\!2,8}}(8,-5)
\Label\o{ X\!\!_{1\!2,9}}(9,-5)
\Label\o{ X\!\!_{1\!2,1\!0}}(10,-5)
\Label\o{ 0}(11,-5)
\Label\o{ 0}(12,-5)
\Label\o{ 0}(13,-5)
\Label\o{ X\!\!_{1\!1,0}}(0,-4)
\Label\o{ X\!\!_{1\!1,1}}(1,-4)
\Label\o{ X\!\!_{1\!1,2}}(2,-4)
\Label\o{ X\!\!_{1\!1,3}}(3,-4)
\Label\o{ X\!\!_{1\!1,4}}(4,-4)
\Label\o{ X\!\!_{1\!1,5}}(5,-4)
\Label\o{ X\!\!_{1\!1,6}}(6,-4)
\Label\o{ X\!\!_{1\!1,7}}(7,-4)
\Label\o{ X\!\!_{1\!1,8}}(8,-4)
\Label\o{ X\!\!_{1\!1,9}}(9,-4)
\Label\o{ X\!\!_{1\!1,1\!0}}(10,-4)
\Label\o{ X\!\!_{1\!1,1\!1}}(11,-4)
\Label\o{ 0}(12,-4)
\Label\o{ 0}(13,-4)
\Label\o{ X\!\!_{1\!0,0}}(0,-3)
\Label\o{ X\!\!_{1\!0,1}}(1,-3)
\Label\o{ X\!\!_{1\!0,2}}(2,-3)
\Label\o{ X\!\!_{1\!0,3}}(3,-3)
\Label\o{ X\!\!_{1\!0,4}}(4,-3)
\Label\o{ X\!\!_{1\!0,5}}(5,-3)
\Label\o{ X\!\!_{1\!0,6}}(6,-3)
\Label\o{ X\!\!_{1\!0,7}}(7,-3)
\Label\o{ X\!\!_{1\!0,8}}(8,-3)
\Label\o{ X\!\!_{1\!0,9}}(9,-3)
\Label\o{ X\!\!_{1\!0,1\!0}}(10,-3)
\Label\o{ X\!\!_{1\!0,1\!1}}(11,-3)
\Label\o{ X\!\!_{1\!0,1\!2}}(12,-3)
\Label\o{ 0}(13,-3)
\Label\o{ X\!\!_{9,0}}(0,-2)
\Label\o{ X\!\!_{9,1}}(1,-2)
\Label\o{ X\!\!_{9,2}}(2,-2)
\Label\o{ X\!\!_{9,3}}(3,-2)
\Label\o{ X\!\!_{9,4}}(4,-2)
\Label\o{ X\!\!_{9,5}}(5,-2)
\Label\o{ X\!\!_{9,6}}(6,-2)
\Label\o{ X\!\!_{9,7}}(7,-2)
\Label\o{ X\!\!_{9,8}}(8,-2)
\Label\o{ X\!\!_{9,9}}(9,-2)
\Label\o{ X\!\!_{9,1\!0}}(10,-2)
\Label\o{ X\!\!_{9,1\!1}}(11,-2)
\Label\o{ X\!\!_{9,1\!2}}(12,-2)
\Label\o{ X\!\!_{9,1\!3}}(13,-2)
\Label\o{ 0}(0,-1)
\Label\o{ 0}(1,-1)
\Label\o{ 0}(2,-1)
\Label\o{ 0}(3,-1)
\Label\o{ X\!\!_{8,4}}(4,-1)
\Label\o{ X\!\!_{8,5}}(5,-1)
\Label\o{ X\!\!_{8,6}}(6,-1)
\Label\o{ X\!\!_{8,7}}(7,-1)
\Label\o{ X\!\!_{8,8}}(8,-1)
\Label\o{ X\!\!_{8,9}}(9,-1)
\Label\o{ X\!\!_{8,1\!0}}(10,-1)
\Label\o{ X\!\!_{8,1\!1}}(11,-1)
\Label\o{ X\!\!_{8,1\!2}}(12,-1)
\Label\o{ X\!\!_{8,1\!3}}(13,-1)
\Label\o{ 0}(0,0)
\Label\o{ 0}(1,0)
\Label\o{ 0}(2,0)
\Label\o{ 0}(3,0)
\Label\o{ X\!\!_{7,4}}(4,0)
\Label\o{ X\!\!_{7,5}}(5,0)
\Label\o{ X\!\!_{7,6}}(6,0)
\Label\o{ X\!\!_{7,7}}(7,0)
\Label\o{ X\!\!_{7,8}}(8,0)
\Label\o{ X\!\!_{7,9}}(9,0)
\Label\o{ X\!\!_{7,1\!0}}(10,0)
\Label\o{ X\!\!_{7,1\!1}}(11,0)
\Label\o{ X\!\!_{7,1\!2}}(12,0)
\Label\o{ X\!\!_{7,1\!3}}(13,0)
\Label\o{ 0}(0,1)
\Label\o{ 0}(1,1)
\Label\o{ 0}(2,1)
\Label\o{ 0}(3,1)
\Label\o{ X\!\!_{6,4}}(4,1)
\Label\o{ X\!\!_{6,5}}(5,1)
\Label\o{ X\!\!_{6,6}}(6,1)
\Label\o{ X\!\!_{6,7}}(7,1)
\Label\o{ X\!\!_{6,8}}(8,1)
\Label\o{ X\!\!_{6,9}}(9,1)
\Label\o{ X\!\!_{6,1\!0}}(10,1)
\Label\o{ X\!\!_{6,1\!1}}(11,1)
\Label\o{ X\!\!_{6,1\!2}}(12,1)
\Label\o{ X\!\!_{6,1\!3}}(13,1)
\Label\o{ 0}(0,2)
\Label\o{ 0}(1,2)
\Label\o{ 0}(2,2)
\Label\o{ 0}(3,2)
\Label\o{ 0}(4,2)
\Label\o{ X\!\!_{5,5}}(5,2)
\Label\o{ X\!\!_{5,6}}(6,2)
\Label\o{ X\!\!_{5,7}}(7,2)
\Label\o{ X\!\!_{5,8}}(8,2)
\Label\o{ X\!\!_{5,9}}(9,2)
\Label\o{ X\!\!_{5,1\!0}}(10,2)
\Label\o{ X\!\!_{5,1\!1}}(11,2)
\Label\o{ X\!\!_{5,1\!2}}(12,2)
\Label\o{ X\!\!_{5,1\!3}}(13,2)
\Label\o{ 0}(0,3)
\Label\o{ 0}(1,3)
\Label\o{ 0}(2,3)
\Label\o{ 0}(3,3)
\Label\o{ 0}(4,3)
\Label\o{ 0}(5,3)
\Label\o{ X\!\!_{4,6}}(6,3)
\Label\o{ X\!\!_{4,7}}(7,3)
\Label\o{ X\!\!_{4,8}}(8,3)
\Label\o{ X\!\!_{4,9}}(9,3)
\Label\o{ X\!\!_{4,1\!0}}(10,3)
\Label\o{ X\!\!_{4,1\!1}}(11,3)
\Label\o{ X\!\!_{4,1\!2}}(12,3)
\Label\o{ X\!\!_{4,1\!3}}(13,3)
\Label\o{ 0}(0,4)
\Label\o{ 0}(1,4)
\Label\o{ 0}(2,4)
\Label\o{ 0}(3,4)
\Label\o{ 0}(4,4)
\Label\o{ 0}(5,4)
\Label\o{ 0}(6,4)
\Label\o{ X\!\!_{3,7}}(7,4)
\Label\o{ X\!\!_{3,8}}(8,4)
\Label\o{ X\!\!_{3,9}}(9,4)
\Label\o{ X\!\!_{3,1\!0}}(10,4)
\Label\o{ X\!\!_{3,1\!1}}(11,4)
\Label\o{ X\!\!_{3,1\!2}}(12,4)
\Label\o{ X\!\!_{3,1\!3}}(13,4)
\Label\o{ 0}(0,5)
\Label\o{ 0}(1,5)
\Label\o{ 0}(2,5)
\Label\o{ 0}(3,5)
\Label\o{ 0}(4,5)
\Label\o{ 0}(5,5)
\Label\o{ 0}(6,5)
\Label\o{ 0}(7,5)
\Label\o{ X\!\!_{2,8}}(8,5)
\Label\o{ X\!\!_{2,9}}(9,5)
\Label\o{ X\!\!_{2,1\!0}}(10,5)
\Label\o{ X\!\!_{2,1\!1}}(11,5)
\Label\o{ X\!\!_{2,1\!2}}(12,5)
\Label\o{ X\!\!_{2,1\!3}}(13,5)
\Label\o{ 0}(0,6)
\Label\o{ 0}(1,6)
\Label\o{ 0}(2,6)
\Label\o{ 0}(3,6)
\Label\o{ 0}(4,6)
\Label\o{ 0}(5,6)
\Label\o{ 0}(6,6)
\Label\o{ 0}(7,6)
\Label\o{ X\!\!_{1,8}}(8,6)
\Label\o{ X\!\!_{1,9}}(9,6)
\Label\o{ X\!\!_{1,1\!0}}(10,6)
\Label\o{ X\!\!_{1,1\!1}}(11,6)
\Label\o{ X\!\!_{1,1\!2}}(12,6)
\Label\o{ X\!\!_{1,1\!3}}(13,6)
\Label\o{ 0}(0,7)
\Label\o{ 0}(1,7)
\Label\o{ 0}(2,7)
\Label\o{ 0}(3,7)
\Label\o{ 0}(4,7)
\Label\o{ 0}(5,7)
\Label\o{ 0}(6,7)
\Label\o{ 0}(7,7)
\Label\o{ X\!\!_{0,8}}(8,7)
\Label\o{ X\!\!_{0,9}}(9,7)
\Label\o{ X\!\!_{0,1\!0}}(10,7)
\Label\o{ X\!\!_{0,1\!1}}(11,7)
\Label\o{ X\!\!_{0,1\!2}}(12,7)
\Label\o{ X\!\!_{0,1\!3}}(13,7)
\hskip9.5cm
\right)
$$
\caption{\small A two-sided ladder}
\label{fig:4}
\end{figure}

\vskip10pt
\begin{figure}
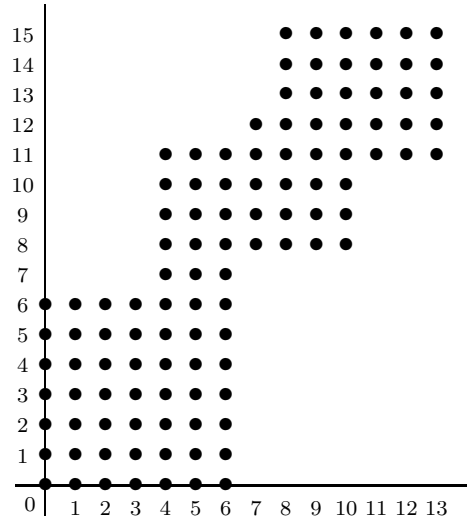

\tiny
$$\Einheit.4cm
\Koordinatenachsen(14,16)(0,0)
\DuennPunkt(0,0)
\DuennPunkt(1,0)
\DuennPunkt(2,0)
\DuennPunkt(3,0)
\DuennPunkt(4,0)
\DuennPunkt(5,0)
\DuennPunkt(6,0)
%
\DuennPunkt(0,1)
\DuennPunkt(1,1)
\DuennPunkt(2,1)
\DuennPunkt(3,1)
\DuennPunkt(4,1)
\DuennPunkt(5,1)
\DuennPunkt(6,1)
%
\DuennPunkt(0,2)
\DuennPunkt(1,2)
\DuennPunkt(2,2)
\DuennPunkt(3,2)
\DuennPunkt(4,2)
\DuennPunkt(5,2)
\DuennPunkt(6,2)
%
\DuennPunkt(0,3)
\DuennPunkt(1,3)
\DuennPunkt(2,3)
\DuennPunkt(3,3)
\DuennPunkt(4,3)
\DuennPunkt(5,3)
\DuennPunkt(6,3)
%
\DuennPunkt(0,4)
\DuennPunkt(1,4)
\DuennPunkt(2,4)
\DuennPunkt(3,4)
\DuennPunkt(4,4)
\DuennPunkt(5,4)
\DuennPunkt(6,4)
%
\DuennPunkt(0,5)
\DuennPunkt(1,5)
\DuennPunkt(2,5)
\DuennPunkt(3,5)
\DuennPunkt(4,5)
\DuennPunkt(5,5)
\DuennPunkt(6,5)
%
\DuennPunkt(0,6)
\DuennPunkt(1,6)
\DuennPunkt(2,6)
\DuennPunkt(3,6)
\DuennPunkt(4,6)
\DuennPunkt(5,6)
\DuennPunkt(6,6)
%
\DuennPunkt(4,7)
\DuennPunkt(5,7)
\DuennPunkt(6,7)
%
\DuennPunkt(4,8)
\DuennPunkt(5,8)
\DuennPunkt(6,8)
\DuennPunkt(7,8)
\DuennPunkt(8,8)
\DuennPunkt(9,8)
\DuennPunkt(10,8)
%
\DuennPunkt(4,9)
\DuennPunkt(5,9)
\DuennPunkt(6,9)
\DuennPunkt(7,9)
\DuennPunkt(8,9)
\DuennPunkt(9,9)
\DuennPunkt(10,9)
%
\DuennPunkt(4,10)
\DuennPunkt(5,10)
\DuennPunkt(6,10)
\DuennPunkt(7,10)
\DuennPunkt(8,10)
\DuennPunkt(9,10)
\DuennPunkt(10,10)
%
\DuennPunkt(4,11)
\DuennPunkt(5,11)
\DuennPunkt(6,11)
\DuennPunkt(7,11)
\DuennPunkt(8,11)
\DuennPunkt(9,11)
\DuennPunkt(10,11)
\DuennPunkt(11,11)
\DuennPunkt(12,11)
\DuennPunkt(13,11)
\DuennPunkt(7,12)
\DuennPunkt(8,12)
\DuennPunkt(9,12)
\DuennPunkt(10,12)
\DuennPunkt(11,12)
\DuennPunkt(12,12)
\DuennPunkt(13,12)
\DuennPunkt(8,13)
\DuennPunkt(9,13)
\DuennPunkt(10,13)
\DuennPunkt(11,13)
\DuennPunkt(12,13)
\DuennPunkt(13,13)
\DuennPunkt(8,14)
\DuennPunkt(9,14)
\DuennPunkt(10,14)
\DuennPunkt(11,14)
\DuennPunkt(12,14)
\DuennPunkt(13,14)
\DuennPunkt(8,15)
\DuennPunkt(9,15)
\DuennPunkt(10,15)
\DuennPunkt(11,15)
\DuennPunkt(12,15)
\DuennPunkt(13,15)
\Label\lu{0}(0,0)
\Label\u{ 1}(1,0)
\Label\u{ 2}(2,0)
\Label\u{ 3}(3,0)
\Label\u{ 4}(4,0)
\Label\u{ 5}(5,0)
\Label\u{ 6}(6,0)
\Label\u{ 7}(7,0)
\Label\u{ 8}(8,0)
\Label\u{ 9}(9,0)
\Label\u{ 10}(10,0)
\Label\u{ 11}(11,0)
\Label\u{ 12}(12,0)
\Label\u{ 13}(13,0)
\Label\l{ 1}(0,1)
\Label\l{ 2}(0,2)
\Label\l{ 3}(0,3)
\Label\l{ 4}(0,4)
\Label\l{ 5}(0,5)
\Label\l{ 6}(0,6)
\Label\l{ 7}(0,7)
\Label\l{ 8}(0,8)
\Label\l{ 9}(0,9)
\Label\l{ 10}(0,10)
\Label\l{ 11}(0,11)
\Label\l{ 12}(0,12)
\Label\l{ 13}(0,13)
\Label\l{ 14}(0,14)
\Label\l{ 15}(0,15)
\hskip5.0cm
$$
\caption{\small A two-sided ladder region}
\label{fig:5}
\end{figure}

\vskip10pt
\begin{figure}[h]
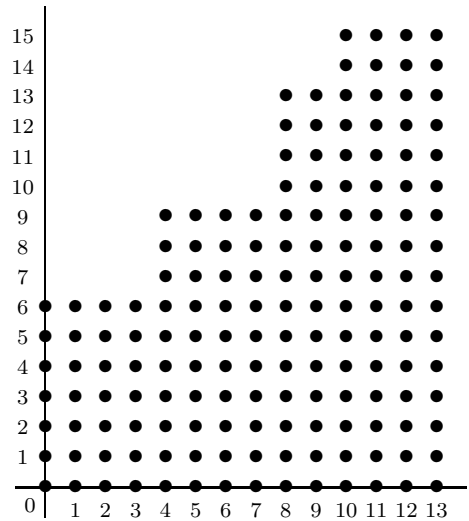

\tiny
$$\Einheit.4cm
\Koordinatenachsen(14,16)(0,0)
\DuennPunkt(0,0)
\DuennPunkt(1,0)
\DuennPunkt(2,0)
\DuennPunkt(3,0)
\DuennPunkt(4,0)
\DuennPunkt(5,0)
\DuennPunkt(6,0)
\DuennPunkt(7,0)
\DuennPunkt(8,0)
\DuennPunkt(9,0)
\DuennPunkt(10,0)
\DuennPunkt(11,0)
\DuennPunkt(12,0)
\DuennPunkt(13,0)
\DuennPunkt(0,1)
\DuennPunkt(1,1)
\DuennPunkt(2,1)
\DuennPunkt(3,1)
\DuennPunkt(4,1)
\DuennPunkt(5,1)
\DuennPunkt(6,1)
\DuennPunkt(7,1)
\DuennPunkt(8,1)
\DuennPunkt(9,1)
\DuennPunkt(10,1)
\DuennPunkt(11,1)
\DuennPunkt(12,1)
\DuennPunkt(13,1)
\DuennPunkt(0,2)
\DuennPunkt(1,2)
\DuennPunkt(2,2)
\DuennPunkt(3,2)
\DuennPunkt(4,2)
\DuennPunkt(5,2)
\DuennPunkt(6,2)
\DuennPunkt(7,2)
\DuennPunkt(8,2)
\DuennPunkt(9,2)
\DuennPunkt(10,2)
\DuennPunkt(11,2)
\DuennPunkt(12,2)
\DuennPunkt(13,2)
\DuennPunkt(0,3)
\DuennPunkt(1,3)
\DuennPunkt(2,3)
\DuennPunkt(3,3)
\DuennPunkt(4,3)
\DuennPunkt(5,3)
\DuennPunkt(6,3)
\DuennPunkt(7,3)
\DuennPunkt(8,3)
\DuennPunkt(9,3)
\DuennPunkt(10,3)
\DuennPunkt(11,3)
\DuennPunkt(12,3)
\DuennPunkt(13,3)
\DuennPunkt(0,4)
\DuennPunkt(1,4)
\DuennPunkt(2,4)
\DuennPunkt(3,4)
\DuennPunkt(4,4)
\DuennPunkt(5,4)
\DuennPunkt(6,4)
\DuennPunkt(7,4)
\DuennPunkt(8,4)
\DuennPunkt(9,4)
\DuennPunkt(10,4)
\DuennPunkt(11,4)
\DuennPunkt(12,4)
\DuennPunkt(13,4)
\DuennPunkt(0,5)
\DuennPunkt(1,5)
\DuennPunkt(2,5)
\DuennPunkt(3,5)
\DuennPunkt(4,5)
\DuennPunkt(5,5)
\DuennPunkt(6,5)
\DuennPunkt(7,5)
\DuennPunkt(8,5)
\DuennPunkt(9,5)
\DuennPunkt(10,5)
\DuennPunkt(11,5)
\DuennPunkt(12,5)
\DuennPunkt(13,5)
\DuennPunkt(0,6)
\DuennPunkt(1,6)
\DuennPunkt(2,6)
\DuennPunkt(3,6)
\DuennPunkt(4,6)
\DuennPunkt(5,6)
\DuennPunkt(6,6)
\DuennPunkt(7,6)
\DuennPunkt(8,6)
\DuennPunkt(9,6)
\DuennPunkt(10,6)
\DuennPunkt(11,6)
\DuennPunkt(12,6)
\DuennPunkt(13,6)
\DuennPunkt(4,7)
\DuennPunkt(5,7)
\DuennPunkt(6,7)
\DuennPunkt(7,7)
\DuennPunkt(8,7)
\DuennPunkt(9,7)
\DuennPunkt(10,7)
\DuennPunkt(11,7)
\DuennPunkt(12,7)
\DuennPunkt(13,7)
\DuennPunkt(4,8)
\DuennPunkt(5,8)
\DuennPunkt(6,8)
\DuennPunkt(7,8)
\DuennPunkt(8,8)
\DuennPunkt(9,8)
\DuennPunkt(10,8)
\DuennPunkt(11,8)
\DuennPunkt(12,8)
\DuennPunkt(13,8)
\DuennPunkt(4,9)
\DuennPunkt(5,9)
\DuennPunkt(6,9)
\DuennPunkt(7,9)
\DuennPunkt(8,9)
\DuennPunkt(9,9)
\DuennPunkt(10,9)
\DuennPunkt(11,9)
\DuennPunkt(12,9)
\DuennPunkt(13,9)
%
\DuennPunkt(8,10)
\DuennPunkt(9,10)
\DuennPunkt(10,10)
\DuennPunkt(11,10)
\DuennPunkt(12,10)
\DuennPunkt(13,10)
%
\DuennPunkt(8,11)
\DuennPunkt(9,11)
\DuennPunkt(10,11)
\DuennPunkt(11,11)
\DuennPunkt(12,11)
\DuennPunkt(13,11)
%
\DuennPunkt(8,12)
\DuennPunkt(9,12)
\DuennPunkt(10,12)
\DuennPunkt(11,12)
\DuennPunkt(12,12)
\DuennPunkt(13,12)
\DuennPunkt(8,13)
\DuennPunkt(9,13)
\DuennPunkt(10,13)
\DuennPunkt(11,13)
\DuennPunkt(12,13)
\DuennPunkt(13,13)
%
\DuennPunkt(10,14)
\DuennPunkt(11,14)
\DuennPunkt(12,14)
\DuennPunkt(13,14)
%
\DuennPunkt(10,15)
\DuennPunkt(11,15)
\DuennPunkt(12,15)
\DuennPunkt(13,15)
\Label\lu{0}(0,0)
\Label\u{ 1}(1,0)
\Label\u{ 2}(2,0)
\Label\u{ 3}(3,0)
\Label\u{ 4}(4,0)
\Label\u{ 5}(5,0)
\Label\u{ 6}(6,0)
\Label\u{ 7}(7,0)
\Label\u{ 8}(8,0)
\Label\u{ 9}(9,0)
\Label\u{ 10}(10,0)
\Label\u{ 11}(11,0)
\Label\u{ 12}(12,0)
\Label\u{ 13}(13,0)
\Label\l{ 1}(0,1)
\Label\l{ 2}(0,2)
\Label\l{ 3}(0,3)
\Label\l{ 4}(0,4)
\Label\l{ 5}(0,5)
\Label\l{ 6}(0,6)
\Label\l{ 7}(0,7)
\Label\l{ 8}(0,8)
\Label\l{ 9}(0,9)
\Label\l{ 10}(0,10)
\Label\l{ 11}(0,11)
\Label\l{ 12}(0,12)
\Label\l{ 13}(0,13)
\Label\l{ 14}(0,14)
\Label\l{ 15}(0,15)
\hskip5.0cm
$$
\caption{\small An upper ladder region}
\label{fig:5a}
\end{figure}

Now fix a ``bivector" $M=[u_1,u_2,\dots,u_n\mid v_1,v_2,\dots,v_n]$ of
positive integers with $u_1<u_2<\dots<u_n \le A+1$ and
$v_1<v_2<\dots<v_n\le B+1$. 
Let $K[Y]$ denote the ring of all polynomials over 
the field $K$
in the $Y_{i,j}$'s, $0\le i\le A$, $0\le j\le B$, and let $I_M(Y)$
be the ideal 
of $K[Y]$ 
generated by those $t\times t$ minors of $Y$
that contain only nonzero entries, whose rows form a subset of the
last $u_t-1$ rows, or whose columns form a subset of the last $v_t-1$
columns, $t=1,2,\dots,n+1$. Here, by convention, $u_{n+1}$ is set
equal to $A+2$, and $v_{n+1}$ is set equal to $B+2$.  (Thus, for
$t=n+1$ the rows and columns of minors are unrestricted.)  The ideal
$I_M(Y)$ is called a {\it ladder determinantal ideal cogenerated by
the minor $M$}. (That one speaks of `the minor $M$' has its
explanation in the identification of the bivector $M$ with a
particular minor of $Y$, cf\@. \cite[Sec.~2]{HeTrAA}. It should be
pointed out that our conventions here deviate slightly from the ones
in \cite{HeTrAA}. In particular, we defined the ideal $I_M(Y)$ by
restricting rows and columns of minors to a certain number of {\it
last\/} rows or columns, while in \cite{HeTrAA} it is {\it first\/}
rows, respectively columns. Clearly, a rotation of the matrix by $180^\circ$
transforms one convention into the other.) 
The associated {\it ladder
determinantal ring cogenerated by $M$\/} is $R_M(Y):=K[Y]/I_M(Y)$. (We
point out that the definition of ladder is more general in 
\cite{AbhyAB,AbKuAC,ConcAB,HeTrAA}. However, there is in effect no loss of
generality since the ladders of \cite{AbhyAB,AbKuAC,ConcAB,HeTrAA} 
can always be reduced to our definition by discarding
superfluous 0's.)

Generalising results of Abhyankar and Kulkarni
\cite{AbhyAB,AbKuAC}, Herzog and Trung \cite{HeTrAA} provided a way
to express the
Hilbert series of the ladder determinantal ring $R_M(Y)$ in 
combinatorial terms. Before we can state the corresponding result, 
as derived by Rubey \cite{RubeAC}, we
need to introduce a few more terms.

When we say {\it lattice path\/} we always mean
a lattice path in the plane consisting of unit horizontal and vertical
steps in the positive direction. In other words, a \emph{lattice path}
is a finite sequence $A_0, A_1, \dots, A_m$ of points in $\Z^2$ such that
$A_i - A_{i-1} = (1,0)$ or $(0,1)$ for all $i=1, \dots , m$. Such a
sequence is sometimes called a lattice path from $A_0$ to $A_m$. In
case the successive differences $A_i - A_{i-1}$ always alternate
between $(1,0)$ or $(0,1)$, then we refer to it as a \emph{zig-zag
  path}. See Figure~\ref{fig:6} for an illustration of a lattice path
from $(1, -1)$ to $(6,6)$. This is not a zig-zag path, but its part
from $(4,3)$ to $(6,5)$ is.  

\begin{figure}[h]
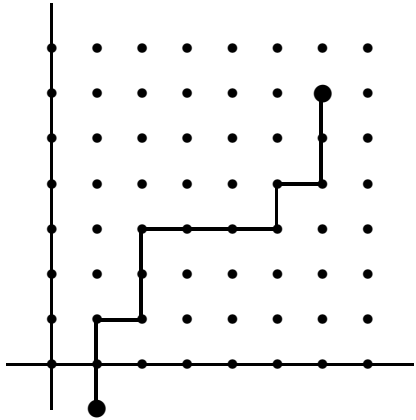

$$\Koordinatenachsen(8,8)(0,0)
\Gitter(8,8)(0,0)
\Pfad(1,-1),221221112122\endPfad
\DickPunkt(1,-1)
\DickPunkt(6,6)
\hskip4cm
$$
\caption{\small A lattice path}
\label{fig:6}
\end{figure}

\noindent
A family $(P_1,P_2,\dots,P_n)$ of
lattice paths $P_{i}$, $i=1,2,\dots,n$, is said to be {\it
non-intersecting\/} if no two lattice paths of this family have a point
in common.  

A point in a lattice path $P$ which is the end point of a vertical
step and at the same time the starting point of a horizontal step will
be called a {\it north-east turn\/} ({\it NE-turn\/} for short) of the
lattice path $P$.  The NE-turns of the lattice path in
Figure~\ref{fig:6} are
$(1,1)$, $(2,3)$, and $(5,4)$. We write $\NE(P)$ for the number of
NE-turns of $P$. Also, given a family $\mathbf P=(P_1,P_2,\dots,P_n)$ of
paths $P_i$, we write $\NE(\mathbf P)$ for the number $\sum _{i=1}
^{n}\NE(P_i)$ of all NE-turns in the family.

We shall say that a lattice path
$P$ stays (or passes) \emph{weakly south-east} of a lattice point
$S=(s_1,s_2)$ if each point $(x,y)$ of $P$ satisfies 
either $x\ge s_1$ (``the point $(x,y)$ lies weakly east
of $S$") or $y\le s_2$ (``the point $(x,y)$ lies weakly south
of $S$"), or both.
Sometimes, this may be expressed by saying that the point
$S$ lies \emph{weakly north-west} of $P$. 
To stay (or pass) \emph{weakly north-east} of a lattice point 
and to stay (or pass) 
\emph{weakly north-west} of a lattice point will have an analogous meaning.

Finally, given any weight function $w$ defined on a finite set
$\mathcal M$ and taking values in a commutative ring, 
by the generating function $\GF(\mathcal M;w)$ we mean 
$\sum _{x\in\mathcal M}^{}w(x)$. 

We are now in the position to state the theorem which connects the
computation of the Hilbert series of a ladder determinantal ring
with the enumeration of (certain) non-intersecting lattice paths.
For a proof, the reader is referred to \cite[Theorem~3.1]{RubeAC}.

\begin{Theorem}
\label{thm:3}
Let $Y=(Y_{i,j})_{0\le i\le A,\ 0\le j\le B}$ be
a two-sided ladder, and let $L$ be the associated ladder region.
Let $M=[u_1,u_2,\dots,u_n\mid v_1,v_2,\dots,v_n]$
be a bivector of positive integers with 
$u_1<u_2<\dots<u_n$ and $v_1<v_2<\dots<v_n$.
Furthermore, let $A^{(i)}=(0,u_{n-i+1}-1)$ and $E^{(i)}=(B-v_{n-i+1}+1,A)$, 
$i=1,2,\dots,n$. 
Recursively, define the regions $L^{(i)}$, $i=n,n-1,\dots,1$,
by $L^{(n)}=L$ and
$$
L^{(i)}=\{(x,y)\in L^{(i+1)}:x\le E_1^{(i)},\ y\ge A_2^{(i)},
\text{ and\/ }(x+1,y-1)\in L^{(i+1)}\}.
$$
Finally, for $i=1,2,\dots,n$ let
$$
B^{(i)}=\{(x,y)\in L^{(i)}:(x+1,y-1)\notin L^{(i)}\},
$$
and let $d$ be the cardinality of\/ $\bigcup_{i=1}^n B^{(i)}$.

Then, under the assumption that all of the points $A^{(i)}$ and
$E^{(i)}$, $i=1,2,\dots,n$,
lie inside the ladder region $L$, the Hilbert series of the ladder determinantal
ring $R_M(Y)=K[Y]/I_M(Y)$ equals
\begin{equation}
\sum _{\ell=0} ^{\infty}\dim_K R_M(Y)_\ell\,z^\ell
=\frac {\GF(\P_L^+(\mathbf A\to \mathbf E);z^{\NE(.)})}
{(1-z)^{d}},
\label{e2.1}
\end{equation}
where $R_M(Y)_\ell$ denotes the homogeneous component of degree $\ell$
in $R_M(Y)$, and where
$\GF(\P_L^+(\mathbf A\to \mathbf E);z^{\NE(.)})$ denotes the generating
function $\sum _{\mathbf P} ^{}z^{\NE(\mathbf P)}$ for all families $\mathbf
P=(P_1,P_2,\dots,P_n)$ of non-intersecting lattice paths, $P_i$ running
from $A^{(i)}$ to $E^{(i)}$ with all its NE-turns lying in 
$L^{(i)}\backslash B^{(i)}$.
\end{Theorem}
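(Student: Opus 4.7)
The plan is to go from algebra to combinatorics in two steps, by Gröbner degeneration to a monomial ideal and then by interpreting the resulting Stanley--Reisner complex via non-intersecting lattice paths. First, I would choose a ``diagonal'' monomial order on $K[Y]$ under which the initial form of every $t\times t$ minor is the product of the entries along its main antidiagonal. A result of Herzog and Trung \cite{HeTrAA} guarantees that the generators of $I_M(Y)$ cogenerating $M$ form a Gröbner basis with respect to this order, so $\operatorname{in}(I_M(Y))$ is the squarefree monomial ideal generated by the antidiagonal products of those minors. Since Hilbert series are invariant under Gröbner degeneration, it remains to compute the Hilbert series of the Stanley--Reisner ring of the associated simplicial complex $\Delta$ on the vertex set $L$.

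For the second step, I would identify the facets of $\Delta$ with non-intersecting families of lattice paths. A subset $S\subseteq L$ is a face of $\Delta$ precisely when the complement $L\setminus S$ meets every antidiagonal of every cogenerated minor; a standard Lindström--Gessel--Viennot--style argument then reads a minimal such complement off as a family $\mathbf P=(P_1,\dots,P_n)$ of non-intersecting lattice paths, with $P_i$ running from $A^{(i)}$ to $E^{(i)}$. The cogenerator conditions on which minors appear in $I_M(Y)$ are precisely encoded by the recursive definition of the regions $L^{(i)}$: the $i$-th path is required to stay in $L^{(i)}$, and the condition $(x+1,y-1)\in L^{(i+1)}$ keeps the paths properly nested so that no smaller cogenerated minor has its antidiagonal in $S$ either. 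Shellability of $\Delta$, also due to Herzog and Trung, then allows one to compute the $h$-polynomial of $\Delta$ as a sum over facets, each weighted according to its restriction in the shelling order.

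The final and most delicate step is to match this shelling weight with the NE-turn statistic. The claim to establish is that the restriction of a facet $\mathbf P$ consists exactly of those lattice points which are NE-turns of its constituent paths and lie in the interior regions $L^{(i)}\setminus B^{(i)}$; NE-turns on the boundary set $B^{(i)}$ are forced on every facet and thus account for the $(1-z)^d$ factor in the denominator, where $d=\bigl|\bigcup_i B^{(i)}\bigr|$ equals the Krull dimension of $R_M(Y)$. The hard part is making this identification precise: one must choose the shelling order (and a compatible indexing of the paths) so that the extracted statistic coincides exactly with $\NE(\mathbf P)$ counted inside the regions $L^{(i)}\setminus B^{(i)}$, and the hypothesis that all $A^{(i)}$ and $E^{(i)}$ lie inside $L$ is used precisely to prevent degenerate contributions at the corners of the ladder region. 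Once this bookkeeping is carried out, summing $z^{\NE(\mathbf P)}$ over non-intersecting families yields the numerator in \eqref{e2.1}.
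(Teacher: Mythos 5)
First, a point of reference: the paper does not actually prove Theorem~\ref{thm:3}; it states it and refers the reader to Rubey \cite[Theorem~3.1]{RubeAC}, whose proof rests on Herzog and Trung \cite{HeTrAA}. Your sketch reproduces exactly that architecture --- diagonal term order, the Herzog--Trung Gr\"obner basis, the Stanley--Reisner complex of the initial ideal, facets read off as families of non-intersecting lattice paths, shellability, and the $h$-polynomial computed from the restriction statistic --- so at the level of strategy you have reconstructed the intended route rather than found a new one.

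The problem is that the decisive step is asserted rather than proved, and you say so yourself: the identification of the shelling restriction of a facet with the set of NE-turns lying in $L^{(i)}\setminus B^{(i)}$ is precisely the content of the theorem beyond standard machinery. It is exactly where the recursively defined regions $L^{(i)}$, the boundary sets $B^{(i)}$, and the hypothesis that the $A^{(i)}$ and $E^{(i)}$ lie in $L$ enter, and it is delicate because for two-sided ladders the paths indexing facets need not stay inside $L$ (Remark~\ref{rem:1}(4)) and the $B^{(i)}$ need not be entirely contained in $L$; none of this bookkeeping is carried out. Two further points are off. Your description of the denominator is wrong as stated: the exponent $d$ in $(1-z)^d$ is the common cardinality of the facets (the total number of lattice points on the $n$ paths, i.e., the Krull dimension), and it coincides with $\bigl|\bigcup_i B^{(i)}\bigr|$ only because each $B^{(i)}$ is itself a monotone lattice path joining translates of $A^{(i)}$ and $E^{(i)}$ (cf.\ Remark~\ref{rem:1}(2)); it is not produced by ``NE-turns forced on every facet,'' and when some points of $B^{(i)}$ fall outside $L$ the two quantities must be reconciled with care. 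Also, the passage from faces to paths goes through the faces themselves, not their complements: a face is a subset containing no full antidiagonal of a cogenerated minor, and it is such a subset that is covered by a family of non-intersecting paths (a Dilworth/Greene-type decomposition rather than a Lindstr\"om--Gessel--Viennot argument). As it stands, the proposal is a correct roadmap to the cited proof, but not a proof.
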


\begin{Remarks} \label{rem:1}
(1)
The condition that all of the points $A^{(i)}$ and $E^{(i)}$
lie inside the ladder region $L$ restricts the choice of ladders. In
particular, for an upper ladder it means that 
$Y_{A-u_{n}+1,0}=X_{A-u_{n}+1,0}$ and
$Y_{0,B-v_{n}+1}=X_{0,B-v_{n}+1}$.
Still, one could prove an analogous result
even if this condition is dropped. In that case, however,
the points $A^{(i)}$ and $E^{(i)}$ have to be modified in
order to lie inside $L$ 
so as to make the right-hand side of
Formula \eqref{e2.1} meaningful. 

\medskip
(2)
The sets $B^{(i)}$, $i=1, 2,\dots , n$, can be visualized as being the
lower-right boundary 
of $L^{(i)}$. Viewed as a path, there are exactly 
$E^{(i)}_1 -A^{(i)}_1 +E^{(i)}_2 -A^{(i)}_2 +1$ lattice
points on $B^{(i)}$, but not all of them are necessarily in $L$
(see \cite[Figures~2 and~3]{RubeAC} for an example). However, if they are,
then
\begin{align*}
d&=
\sum _{i=1} ^{n}\big(E^{(i)}_1 -A^{(i)}_1 +E^{(i)}_2 -A^{(i)}_2
+1\big)\\
&=\sum _{i=1} ^{n}\big((B-v_{n-i+1}+1) +A-(u_{n-i+1}-1)
+1\big)\\
&=(A+B+3)n-
\sum _{i=1} ^{n}(u_i+v_i).
\end{align*}

\medskip
(3) In the case of a one-sided ladder, all the $B^{(i)}$'s are
completely contained in $L$ so that the above remark on $d$ applies.
Furthermore, if the one-sided ladder should be an upper ladder,
then it is easy to see that the technical condition in Theorem~\ref{thm:3}
involving the $L^{(i)}$'s and $B^{(i)}$'s reduces to the much simpler
(and much more intuitive) condition that all the $P_i$'s should
completely lie in~$L$.

\medskip
(4)
It should be observed that the condition imposed on the paths
$P_i$ that all of its NE-turns lie in $L^{(i)}$ (and, thus,
in~$L$) does not imply that $P_i$ lies completely in $L$
(namely, it may run below the lower boundary of $L$);
see \cite[Figure~4]{RubeAC} for an example. 
\end{Remarks}

If we combine the formula for the Hilbert series of $R_M(Y)$ in
Theorem~\ref{thm:3} with the observation made in the introduction 
on how to extract the
$a$-invariant out of such a formula, then we obtain immediately the
following corollary.

\begin{Corollary} \label{cor:1}
Under the assumptions and the notation of Theorem~{\em\ref{thm:3}}, the $a$-invariant of
the ladder determinantal ring $R_M(Y)=K[Y]/I_M(Y)$ is given by
$$
\deg\left(\GF(\P_L^+(\mathbf A\to
  \mathbf E);z^{\NE(.)})\right)-d.
$$
\end{Corollary}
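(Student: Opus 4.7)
The plan is essentially a one-line combination of Theorem~\ref{thm:3} with the Stanley-based formula for the $a$-invariant recalled in the introduction. Setting $H(z) := \GF(\P_L^+(\mathbf A \to \mathbf E); z^{\NE(.)})$, Theorem~\ref{thm:3} writes the Hilbert series of $R_M(Y)$ as $H(z)/(1-z)^d$, and Stanley's formula then yields $a(R_M(Y)) = \deg H(z) - d$, which is exactly the asserted expression.

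All that remains is the verification of the three hypotheses of Stanley's formula: that $H(z) \in \Z[z]$, that $H(1) \neq 0$, and that the exponent $d$ appearing in the denominator coincides with the Krull dimension of $R_M(Y)$. The first is transparent, since $H(z)$ is a finite sum of monomials $z^{\NE(\mathbf P)}$ over the finite set of admissible path families and so has nonnegative integer coefficients. For the second, the hypothesis of Theorem~\ref{thm:3} that all points $A^{(i)}$ and $E^{(i)}$ lie inside $L$ guarantees at least one admissible non-intersecting family (for instance, the one obtained by routing each $P_i$ along the lower-right boundary $B^{(i)}$, the nesting $L^{(1)} \subseteq L^{(2)} \subseteq \cdots \subseteq L^{(n)}$ providing the non-intersection), so $H(1) \geq 1$. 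For the third, the Cohen--Macaulay property of $R_M(Y)$ established by Herzog and Trung~\cite{HeTrAA} forces the pole order at $z=1$ of the Hilbert series to equal the Krull dimension, and $H(1) \neq 0$ then pins this pole order down as~$d$.

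There is no genuine obstacle: the substance of the statement resides entirely in Theorem~\ref{thm:3} (proved by Rubey in~\cite{RubeAC}) and in the Goto--Watanabe--Stanley framework already cited in Section~\ref{sec:Intr}. Once these two inputs are in hand, the corollary is truly immediate, and the short bookkeeping above dispatches the compatibility of the two formulas.
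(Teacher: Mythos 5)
Your proof is correct and takes exactly the route the paper itself takes: the paper's entire argument for this corollary is the single sentence that it is immediate from Theorem~\ref{thm:3} combined with the Stanley/Goto--Watanabe observation recalled in the introduction. Your additional verification of the hypotheses of Stanley's formula (that $H(z)\in\Z[z]$, that $H(1)\neq 0$, and that $d$ equals the Krull dimension, the last via the Cohen--Macaulayness established by Herzog and Trung) merely fills in details the paper leaves implicit.
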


Hence, if we want to express the $a$-invariant of $R_M(Y)$ in terms
of $M$ and the ladder $Y$, then we must determine the degree of the
polynomial $\GF(\P_L^+(\mathbf A\to \mathbf E);z^{\NE(.)})$. This
amounts to determining the maximum number of NE-turns a family
of non-intersecting lattice paths as described in Theorem~\ref{thm:3}
can attain. This is what we shall do in the following sections.

\section{How to achieve the maximum number of NE-turns: the one-sided
case}
\label{sec:aux1}

We start with the consideration of upper ladders, see Figure~\ref{fig:5a} for
an example. By Remark~\ref{rem:1}.(3), in that case we do not
have to worry about the
technical condition involving the $L^{(i)}$'s and the $B^{(i)}$'s
as long as we make sure that all the paths $P^{(i)}$ lie completely
in~$L$.

We begin with the task of maximising the number of NE-turns of a
single path in an upper ladder. In this and the following section, we formulate
the ladder restriction as the restriction that paths should stay
south-east of some given lattice points. Clearly, the restriction
imposed by an upper ladder can be formulated in that way: one chooses
the points $S_1,S_2,\dots$ in Lemmas~\ref{lem:1} and \ref{lem:2}
as the ``inwards" corners of the upper boundary of $L$, that is,
the elements $(x,y)\in L$ for which both $(x-1,y)$ and $(x,y+1)$
are in $L$ but $(x-1,y+1)$ is not. For example, the inwards corners
of the ladder region in Figure~\ref{fig:5a} are 
$(4,6)$,
$(8,9)$,
and $(10,13)$.

\begin{Lemma} \label{lem:1}
Let $A=(a,b)$, $B=(c,d)$, and $S_i=(x_i,y_i)$ be
lattice points with $a\le x_i\le c$ and $b\le y_i\le d$,
for $i=1,2,\dots,m$. The number of NE-turns of a lattice path from $A$ 
to $B$ which stays weakly south-east of $S_i$, for $i=1,2,\dots,m$, 
is at most
\begin{equation} \label{eq:1}
\min\!\big\{c-a,d-b,c-b-\max\{x_i-y_i:1\le i\le m\}\big\}.
\end{equation}
The maximum is for instance realised by the path which consists of a
zig-zag path which passes through
one of the points $S_j$ for which $x_j-y_j$ equals
$\max\{x_i-y_i:1\le i\le m\}$, 
supplemented by a straight horizontal piece at the beginning and 
a straight vertical piece at the end, as is necessary to connect $A$
with $B$; cf.\ Figure~{\em\ref{fig:2}}.
\end{Lemma}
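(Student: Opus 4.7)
The plan is to handle the two assertions of the lemma separately: first the upper bound, then the explicit construction that attains it. For the upper bound, the first two terms of the minimum are immediate, because a lattice path from $(a,b)$ to $(c,d)$ contains exactly $c-a$ horizontal and $d-b$ vertical steps, and each NE-turn consumes one step of each type (with disjoint pairings across distinct NE-turns). The substantive content lies in the third term, $c-b-\max\{x_i-y_i\}$.

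For this third bound, I would fix a single constraint $S_i=(x_i,y_i)$ and order the NE-turns along the path as $(u_1,v_1),\dots,(u_k,v_k)$. Between two consecutive NE-turns there is necessarily at least one horizontal step and at least one vertical step, so the $u_j$'s are strictly increasing and drawn from $\{a,\dots,c-1\}$, and the $v_j$'s are strictly increasing and drawn from $\{b+1,\dots,d\}$. The south-east condition forces each NE-turn to satisfy $u_j\ge x_i$ or $v_j\le y_i$. The number of distinct $u_j$'s falling in $\{x_i,\dots,c-1\}$ is at most $c-x_i$, and the number of distinct $v_j$'s falling in $\{b+1,\dots,y_i\}$ is at most $y_i-b$; summing yields $k\le c-b-(x_i-y_i)$. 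Taking the minimum over $i=1,\dots,m$ gives the third bound in \eqref{eq:1}.

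For the attainment, let $j$ be an index maximising $x_i-y_i$, write $k^{\ast}$ for the minimum in \eqref{eq:1}, and assume first that $k^{\ast}=c-b-(x_j-y_j)$ (the other two subcases are handled by the same construction, with the initial horizontal or final vertical pad reduced to length zero). I would take the path that performs $c-a-k^{\ast}$ horizontal steps from $(a,b)$, then a zig-zag $VHVH\cdots VH$ with $k^{\ast}$ occurrences of $VH$, and finally $d-b-k^{\ast}$ vertical steps to reach $(c,d)$. A direct count shows that it has exactly $k^{\ast}$ NE-turns, all lying on the diagonal $u-v=x_j-y_j-1$; moreover, the endpoint of its $(y_j-b)$-th horizontal step within the zig-zag is precisely $S_j$, so the path passes through $S_j$ as asserted.

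The main obstacle will be verifying that this path stays weakly south-east of \emph{every} $S_i$ simultaneously, not just of $S_j$. The initial horizontal run lies at height $v=b\le y_i$ and the final vertical run lies at abscissa $u=c\ge x_i$, so both are safe. For a zig-zag point $(u,v)$ one has $u-v\ge x_j-y_j-1\ge x_i-y_i-1$ by maximality of $x_j-y_j$; whenever $v>y_i$ this forces $u\ge v+x_i-y_i-1\ge x_i$, as required. Thus the choice of $j$ as the maximiser of $x_i-y_i$ is exactly what enables a single zig-zag path to navigate around all constraints $S_i$ at once, and the small case analysis for the degenerate situations $k^{\ast}=c-a$ or $k^{\ast}=d-b$ proceeds analogously.
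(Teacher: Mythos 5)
Your proof is correct and follows essentially the same route as the paper: the bound comes from splitting the NE-turns according to their position relative to a point $S_j$ maximising $x_j-y_j$, and the maximum is attained by the same padded zig-zag path through $S_j$. If anything, your counting argument for the upper bound (distinct $u$- and $v$-coordinates of turns, each turn lying in one of two coordinate ranges) is more explicit than the paper's ``it is then obvious'' step, and your verification that the constructed path clears \emph{all} the $S_i$ simultaneously is carried out in the same way the paper asserts it.
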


\begin{figure}[h]
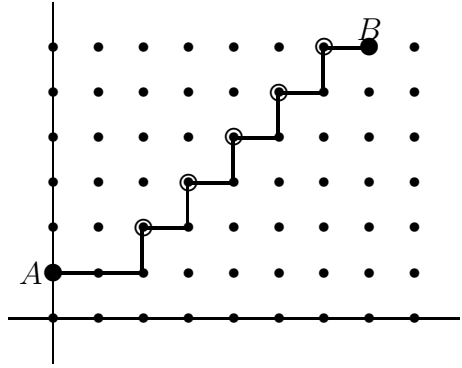

$$
\Gitter(9,7)(0,0)
\Koordinatenachsen(9,7)(0,0)
\Pfad(0,1),112121212121\endPfad
\DickPunkt(0,1)
\DickPunkt(7,6)
\Kreis(2,2)
\Kreis(3,3)
\Kreis(4,4)
\Kreis(5,5)
\Kreis(6,6)
\Label\l{A}(0,1)
\Label\o{B}(7,6)
\hskip5cm
$$
\caption{\small A lattice path with maximal number of NE-turns}
\label{fig:1}
\end{figure}

\begin{Remark} \label{rem:2}
The expression in \eqref{eq:1} could be further economised to
$$
c-b-\max\{x_i-y_i:0\le i\le m+1\},
$$
by including $(a,b)$ and $(c,d)$ in the restriction points $S_i$, that
is, by setting $S_0=(a,b)$ and $S_{m+1}=(c,d)$.
\end{Remark}

\begin{proof}[Proof of Lemma~\ref{lem:1}] 
We discuss the case where $d-b\le c-a$, the other case being
completely analogous.
If $m=0$, that is, if the set of points $S_j$ is empty, then the path
which starts with a straight horizontal piece from $A$ to $(b+c-d,b)$
and then continues with a zig-zag path until $B$ 
(starting with an up-step and
terminating with a right-step; see the example in
Figure~\ref{fig:1}, where $a=0$, $b=1$, $c=7$, and $d=6$) 
attains the maximal possible number of NE-turns
for paths between $A$ and $B$, namely~$d-b$.

\begin{figure}[h]
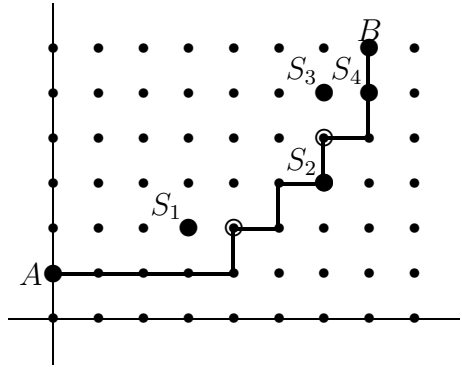

$$
\Gitter(9,7)(0,0)
\Koordinatenachsen(9,7)(0,0)
\Pfad(0,1),111121212122\endPfad
\DickPunkt(0,1)
\DickPunkt(7,6)
\DickPunkt(3,2)
\DickPunkt(6,3)
\DickPunkt(6,5)
\DickPunkt(7,5)
\Kreis(4,2)
\Kreis(6,3)
\Kreis(6,4)
\Label\l{A}(0,1)
\Label\o{B}(7,6)
\Label\lo{S_1}(3,2)
\Label\lo{S_2}(6,3)
\Label\lo{S_3}(6,5)\
\Label\lo{S_4}(7,5)
\hskip5cm
$$
\caption{\small A lattice path with maximal number of NE-turns 
south-east of the $S_i$'s}
\label{fig:2}
\end{figure}

Now let $m\ge1$. Clearly, as long as all points $S_j$ lie weakly 
north-west of the path constructed above (and exemplified in
Figure~\ref{fig:1}), the maximal possible number of NE-turns will
still equal $d-b$. This is 
well 
in accordance with \eqref{eq:1}.
In symbols, this case is characterised by the property that
$x_j-y_j\le c-d$ for all $j$.

On the other hand, let $S_j$ be a point for which $x_j-y_j$ equals
$\max\{x_i-y_i:1\le i\le m\}$ and $x_j-y_j>c-d$. 
It is then obvious (see also
the example in Figure~\ref{fig:2}, where $m=4$,
$A=(0,1)$,
$B=(7,6)$,
$S_1=(3,2)$,
$S_2=(6,3)$,
$S_3=(6,5)$,
$S_4=(7,5)$) 
that any path between $A$ and $S_j$ cannot have
more than $y_j-b$ NE-turns, of which one possible path is the one
which starts with a straight horizontal piece between $A$ and
$(b+x_j-y_j,b)$, and then continues with a zig-zag path until $S_j$
(beginning with an up-step and terminating with a right-step);
see again Figure~\ref{fig:2}, with $j=2$. Similarly, 
any path between $S_j$ and $B$ cannot have
more than $c-x_j$ NE-turns, of which one possible path is the one
which starts with a zig-zag path between $S_j$ and $(c,c-x_j+y_j)$
(beginning with an up-step and terminating with a right-step), 
and then continues with a straight vertical piece between $(c,c-x_j+y_j)$ and
$B$; Figure~\ref{fig:2} provides again an illustration. It is also
clear that all other $S_i$'s will lie weakly north-west of these path
portions. If we add 
the number of these NE-turns, 
then we obtain
\begin{align*} 
(y_j-b)+(c-x_j)=c-b-x_j+y_j.
\end{align*}
This agrees indeed with \eqref{eq:1}.
\end{proof}

We move on to the case of families of non-intersecting lattice paths.
The next lemma tells us 
the restriction that 
an upper ladder imposes on
the $i$-th path in a family of non-intersecting lattice paths, allowing
us to break the problem of finding the maximum total number of NE-turns
in families of non-intersecting lattice paths down to independent
maximisation problems for single paths (with the solution to the
latter problem being provided for by Lemma~\ref{lem:1}).

\begin{Lemma} \label{lem:2}
Let $A^{(i)}=(0,a_i)$, $E^{(i)}=(B-b_i,A)$, $i=1,2,\dots,n$, 
and $S_i=(x_i,y_i)$, $i=1,2,\dots,m$, be
lattice points in the plane with 
$a_1>a_2>\dots>a_n$ and $b_1>b_2>\dots>b_n$,
$0\le x_i\le B-b_n$ and $a_n\le y_i\le A$.
Then, in any family $(P_1,P_2,\dots,P_n)$ of non-intersecting lattice
paths, where $P_i$ runs from $A^{(i)}$ to $E^{(i)}$ and stays
weakly south-east of $S_k$, for $k=1,2,\dots,m$, 
the path $P_i$ has to 
stay weakly south-east of all points
\begin{multline} \label{eq:2}
\{(i-j,a_j-i+j): j=1,2,\dots,i\}
\cup
\{(B-b_j+i-j,A-i+j):j=1,2,\dots,i\}\\
\cup
\{S_k+(i-1,-i+1):k=1,2,\dots,m\}
\end{multline}
\end{Lemma}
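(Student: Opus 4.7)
The plan is to argue by induction on $i$. For the base case $i=1$, the set in \eqref{eq:2} reduces to $\{A^{(1)},E^{(1)}\}\cup\{S_k:k=1,\dots,m\}$, and $P_1$ lies weakly south-east of all of these by the hypotheses of the lemma (the first two are its own endpoints, the remaining ones by assumption).

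The engine of the inductive step is the following ``shift observation'': \emph{if $Q_1,Q_2$ are non-intersecting lattice paths with $Q_1$ starting strictly above $Q_2$, and $Q_1$ stays weakly south-east of a lattice point $T=(t_1,t_2)$, then $Q_2$ stays weakly south-east of $T+(1,-1)$.} To prove this, let $g_\ell(x)$ denote the maximum $y$-coordinate attained by $Q_\ell$ at column $x$. From the starting configuration and non-intersection, one checks by induction on $x$ that $Q_1$ lies weakly above $Q_2$ at every column. Then non-intersection at column $x\ge 1$ forces
\[
g_2(x)\le g_1(x-1)-1,
\]
since $Q_2$'s topmost point at column $x$ must sit strictly below $Q_1$'s entry into column $x$, which occurs at height $g_1(x-1)$. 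Hence, if $(x,y)\in Q_2$ satisfies $x\ge 1$ and $y>t_2-1$, then $g_1(x-1)\ge g_2(x)+1\ge y+1>t_2$, so $Q_1$ visits a point at column $x-1$ whose $y$-coordinate exceeds $t_2$; the SE-restriction on $Q_1$ then forces $x-1\ge t_1$, i.e., $x\ge t_1+1$. The edge case $x=0$ is handled by chaining the analogous column-$0$ inequalities $g_\ell(0)\le g_{\ell-1}(0)-1$ to obtain $g_i(0)\le g_1(0)-(i-1)$, and combining this with either the $S_k$-restriction applied to $P_1$ (yielding $g_1(0)\le s_2$ when $s_1\ge 1$) or the strict decrease $a_1>\cdots>a_i$ (yielding $g_i(0)\le a_{i-1}-1\le a_j-(i-j)$).

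Applying the shift observation to the pair $(P_{i-1},P_i)$ for each point in the induction hypothesis set for $P_{i-1}$, and verifying the arithmetic
\begin{align*}
(i-1-j,\,a_j-i+1+j)+(1,-1)&=(i-j,\,a_j-i+j),\\
(B-b_j+i-1-j,\,A-i+1+j)+(1,-1)&=(B-b_j+i-j,\,A-i+j),\\
S_k+(i-2,\,-(i-2))+(1,-1)&=S_k+(i-1,\,-(i-1)),
\end{align*}
one recovers exactly the set in \eqref{eq:2} for $P_i$, with the sole exception of the $j=i$ members of the first two families, namely $A^{(i)}=(0,a_i)$ and $E^{(i)}=(B-b_i,A)$; these are trivially dominated in the south-east sense, being the starting and ending points of $P_i$.

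The chief obstacle is the $x=0$ edge case in the shift observation, where the key inequality $g_2(x)\le g_1(x-1)-1$ refers to the nonexistent column $-1$; this must be replaced by a chain of column-$0$ non-intersection inequalities combined with the appropriate bound on $g_1(0)$ (from either the $S_k$-restriction on $P_1$ or the gap hypothesis on the $a_j$'s). Apart from this, the argument amounts to routine bookkeeping of how the three types of restriction points transform under the shift $(1,-1)$.
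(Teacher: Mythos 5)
Your proof is correct and follows essentially the same route as the paper's: the paper's two-line argument is precisely the observation that non-intersection forces each successive path to stay strictly south-east of its predecessor, so that every south-east restriction on $P_j$ propagates to $P_{j+1}$ shifted by $(1,-1)$, iterated from $P_1$ down to $P_i$. You merely formalise this shift step via the column-maxima $g_\ell(x)$ and treat the column-$0$ boundary case explicitly, which the paper passes over with ``the claimed conclusion then follows without difficulty.''
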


\begin{proof}
Since the paths $P_1,P_2,\dots,P_n$ are non-intersecting, the paths
$P_{j+1},P_{j+2},\dots,P_{i-1}$ must stay between $P_j$ and $P_i$,
for all $j<i$. The point $A_j=(0,a_j)$ belongs to the path $P_j$,  
whereas the path 
$P_{j+1}$ must stay {\it strictly} south-east of
$P_j$. In particular, it must 
stay weakly south-east of $(1,a_j-1)$.
The same argument is repeated with $P_{j+1}$ and $P_{j+2}$, etc. The
claimed conclusion then follows without difficulty.
\end{proof}

%

\section{The main theorem for one-sided ladder regions}
\label{sec:main1}

We now apply the findings of the previous section to obtain our
first main result.

\begin{Theorem} \label{thm:1}
Let $A^{(i)}=(0,a_i)$, $E^{(i)}=(B-b_i,A)$, $i=1,2,\dots,n$, 
and $S_i=(x_i,y_i)$, $i=1,2,\dots,m$, be
lattice points in the plane with 
$a_1>a_2>\dots>a_n$ and $b_1>b_2>\dots>b_n$,
$0\le x_i\le B-b_n$ and $a_n\le y_i\le A$.
The maximum number of NE-turns which a family 
$(P_1,P_2,\dots,P_n)$ of non-intersecting lattice
paths, where $P_i$ runs from $A^{(i)}$ to $E^{(i)}$ and stays
weakly south-east of $S_k$, for $k=1,2,\dots,m$, can attain is
$$ \sum _{i=1} ^{n}t_i,
$$
where
\begin{multline*}
t_i=
B-a_i-b_i-
\max\big(\{-a_j+2(i-j),B-A-b_j+2(i-j):1\le j\le i\}\\
\cup
\{x_k-y_k+2(i-1):1\le k\le m\}\big).
\end{multline*}
\end{Theorem}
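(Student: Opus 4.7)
The plan is to prove the theorem by establishing a matching upper bound, via Lemmas~\ref{lem:1} and~\ref{lem:2}, and a lower bound via an explicit construction of a non-intersecting family attaining this bound.

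For the upper bound, I would argue as follows. By Lemma~\ref{lem:2}, every path $P_i$ in an admissible non-intersecting family must stay weakly south-east of each point of the set displayed in \eqref{eq:2}. I then apply Lemma~\ref{lem:1} to the single path $P_i$, with starting point $A^{(i)}=(0,a_i)$, ending point $E^{(i)}=(B-b_i,A)$, and restriction set given by \eqref{eq:2}, using Remark~\ref{rem:2} to absorb the endpoints into the restriction set. A direct computation of $x-y$ for the three families of points in \eqref{eq:2} yields precisely $2(i-j)-a_j$ and $B-A-b_j+2(i-j)$ for $1\le j\le i$, and $x_k-y_k+2(i-1)$ for $1\le k\le m$; the endpoints of $P_i$ correspond to the $j=i$ cases of the first two families. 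This gives $\NE(P_i)\le t_i$, and summing over $i$ yields $\sum_i \NE(P_i)\le \sum_i t_i$.

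For the lower bound, I would construct an explicit non-intersecting family $(P_1^*,\dots,P_n^*)$ attaining $\sum_i t_i$. For each $i$, fix a point $S^*_i$ of \eqref{eq:2} that maximizes $x-y$, using a consistent tie-breaking rule across all $i$, and define $P_i^*$ via the extremal zig-zag construction of Lemma~\ref{lem:1}: a straight horizontal prefix from $A^{(i)}$, a zig-zag passing through $S^*_i$, and a straight vertical suffix ending at $E^{(i)}$. By Lemma~\ref{lem:1} this gives $\NE(P_i^*)=t_i$ for every $i$, so if the family is non-intersecting we are done.

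The main obstacle will be to verify that the family $(P_1^*,\dots,P_n^*)$ is indeed non-intersecting. The crucial structural observation is that the restriction set \eqref{eq:2} at index $i+1$ is obtained from that at index $i$ by translating all ``inherited'' points by $(+1,-1)$ and adjoining the new boundary points $(0,a_{i+1})$ and $(B-b_{i+1},A)$, which lie strictly south and strictly east, respectively, of their index-$i$ counterparts (since $a_{i+1}<a_i$ and $b_{i+1}<b_i$). With consistent choices of $S^*_i$, I expect to argue by induction on $i$ that the zig-zag construction for $P_{i+1}^*$ is essentially the $(+1,-1)$-shift of the construction for $P_i^*$, with the horizontal and vertical pieces at the ends suitably modified by the new endpoints, forcing $P_{i+1}^*$ to lie strictly south-east of $P_i^*$. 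This will require a careful case analysis according to which of the three families in \eqref{eq:2} the maximizer $S^*_i$ belongs to, and is where the technical heart of the proof lies.
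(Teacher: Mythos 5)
Your proposal is correct and follows essentially the same route as the paper: the paper's entire proof is the one-line observation that the result "follows immediately" by combining Lemma~\ref{lem:2} with Lemma~\ref{lem:1} (with $a=0$, $b=a_i$, $c=B-b_i$, $d=A$, and the restriction points being those of \eqref{eq:2}), taking Remark~\ref{rem:2} into account --- exactly your upper-bound argument. The attainability step that you rightly flag as the technical heart (checking that the extremal zig-zag paths can be chosen to form a non-intersecting family) is left entirely implicit in the paper, so your sketch is, if anything, more complete than the published argument.
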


\begin{proof}
This follows immediately if Lemma~\ref{lem:2} is combined with
Lemma~\ref{lem:1} (with $a=0$, $b=a_i$, $c=B-b_i$, $d=A$,
and the points $S_i$
being the points in \eqref{eq:2}), by also taking Remark~\ref{rem:2}
into account.
\end{proof}

\begin{Example} \label{ex:1}
In order to illustrate Theorem~\ref{thm:1}, we choose 
$A=15$, $B=13$, $n=3$, $a_1=5$, $a_2=4$, $a_3=2$, $b_1=3$, $b_2=1$, $b_3=0$
(so that $A^{(1)}=(0,5)$, $A^{(2)}=(0,4)$, $A^{(3)}=(0,2)$, 
$B^{(1)}=(10,15)$, $B^{(2)}=(12,15)$, $B^{(3)}=(13,15)$),
$S_1=(4,6)$, $S_2=(8,9)$, and $S_3=(10,13)$.

\begin{figure}[h]
\tiny
$$
\Koordinatenachsen(14,16)(0,0)
\Kreis(0,5)
\Kreis(0,4)
\Kreis(0,2)
\Kreis(10,15)
\Kreis(12,15)
\Kreis(13,15)
\Pfad(0,5),11112121212121212222\endPfad
\Pfad(0,4),11111212121212121212222\endPfad
\Pfad(0,2),11111212121212121212122222\endPfad
\FeinPunkt(0,0)
\FeinPunkt(1,0)
\FeinPunkt(2,0)
\FeinPunkt(3,0)
\FeinPunkt(4,0)
\FeinPunkt(5,0)
\FeinPunkt(6,0)
\FeinPunkt(7,0)
\FeinPunkt(8,0)
\FeinPunkt(9,0)
\FeinPunkt(10,0)
\FeinPunkt(11,0)
\FeinPunkt(12,0)
\FeinPunkt(13,0)
\FeinPunkt(0,1)
\FeinPunkt(1,1)
\FeinPunkt(2,1)
\FeinPunkt(3,1)
\FeinPunkt(4,1)
\FeinPunkt(5,1)
\FeinPunkt(6,1)
\FeinPunkt(7,1)
\FeinPunkt(8,1)
\FeinPunkt(9,1)
\FeinPunkt(10,1)
\FeinPunkt(11,1)
\FeinPunkt(12,1)
\FeinPunkt(13,1)
\FeinPunkt(0,2)
\FeinPunkt(1,2)
\FeinPunkt(2,2)
\FeinPunkt(3,2)
\FeinPunkt(4,2)
\FeinPunkt(5,2)
\FeinPunkt(6,2)
\FeinPunkt(7,2)
\FeinPunkt(8,2)
\FeinPunkt(9,2)
\FeinPunkt(10,2)
\FeinPunkt(11,2)
\FeinPunkt(12,2)
\FeinPunkt(13,2)
\FeinPunkt(0,3)
\FeinPunkt(1,3)
\FeinPunkt(2,3)
\FeinPunkt(3,3)
\FeinPunkt(4,3)
\FeinPunkt(5,3)
\FeinPunkt(6,3)
\FeinPunkt(7,3)
\FeinPunkt(8,3)
\FeinPunkt(9,3)
\FeinPunkt(10,3)
\FeinPunkt(11,3)
\FeinPunkt(12,3)
\FeinPunkt(13,3)
\FeinPunkt(0,4)
\FeinPunkt(1,4)
\FeinPunkt(2,4)
\FeinPunkt(3,4)
\FeinPunkt(4,4)
\FeinPunkt(5,4)
\FeinPunkt(6,4)
\FeinPunkt(7,4)
\FeinPunkt(8,4)
\FeinPunkt(9,4)
\FeinPunkt(10,4)
\FeinPunkt(11,4)
\FeinPunkt(12,4)
\FeinPunkt(13,4)
\FeinPunkt(0,5)
\FeinPunkt(1,5)
\FeinPunkt(2,5)
\FeinPunkt(3,5)
\FeinPunkt(4,5)
\FeinPunkt(5,5)
\FeinPunkt(6,5)
\FeinPunkt(7,5)
\FeinPunkt(8,5)
\FeinPunkt(9,5)
\FeinPunkt(10,5)
\FeinPunkt(11,5)
\FeinPunkt(12,5)
\FeinPunkt(13,5)
\FeinPunkt(0,6)
\FeinPunkt(1,6)
\FeinPunkt(2,6)
\FeinPunkt(3,6)
\FeinPunkt(4,6)
\FeinPunkt(5,6)
\FeinPunkt(6,6)
\FeinPunkt(7,6)
\FeinPunkt(8,6)
\FeinPunkt(9,6)
\FeinPunkt(10,6)
\FeinPunkt(11,6)
\FeinPunkt(12,6)
\FeinPunkt(13,6)
\FeinPunkt(4,7)
\FeinPunkt(5,7)
\FeinPunkt(6,7)
\FeinPunkt(7,7)
\FeinPunkt(8,7)
\FeinPunkt(9,7)
\FeinPunkt(10,7)
\FeinPunkt(11,7)
\FeinPunkt(12,7)
\FeinPunkt(13,7)
\FeinPunkt(4,8)
\FeinPunkt(5,8)
\FeinPunkt(6,8)
\FeinPunkt(7,8)
\FeinPunkt(8,8)
\FeinPunkt(9,8)
\FeinPunkt(10,8)
\FeinPunkt(11,8)
\FeinPunkt(12,8)
\FeinPunkt(13,8)
\FeinPunkt(4,9)
\FeinPunkt(5,9)
\FeinPunkt(6,9)
\FeinPunkt(7,9)
\FeinPunkt(8,9)
\FeinPunkt(9,9)
\FeinPunkt(10,9)
\FeinPunkt(11,9)
\FeinPunkt(12,9)
\FeinPunkt(13,9)
%
\FeinPunkt(8,10)
\FeinPunkt(9,10)
\FeinPunkt(10,10)
\FeinPunkt(11,10)
\FeinPunkt(12,10)
\FeinPunkt(13,10)
%
\FeinPunkt(8,11)
\FeinPunkt(9,11)
\FeinPunkt(10,11)
\FeinPunkt(11,11)
\FeinPunkt(12,11)
\FeinPunkt(13,11)
%
\FeinPunkt(8,12)
\FeinPunkt(9,12)
\FeinPunkt(10,12)
\FeinPunkt(11,12)
\FeinPunkt(12,12)
\FeinPunkt(13,12)
\FeinPunkt(8,13)
\FeinPunkt(9,13)
\FeinPunkt(10,13)
\FeinPunkt(11,13)
\FeinPunkt(12,13)
\FeinPunkt(13,13)
%
\FeinPunkt(10,14)
\FeinPunkt(11,14)
\FeinPunkt(12,14)
\FeinPunkt(13,14)
%
\FeinPunkt(10,15)
\FeinPunkt(11,15)
\FeinPunkt(12,15)
\FeinPunkt(13,15)
\Label\lu{0}(0,0)
\Label\u{ 1}(1,0)
\Label\u{ 2}(2,0)
\Label\u{ 3}(3,0)
\Label\u{ 4}(4,0)
\Label\u{ 5}(5,0)
\Label\u{ 6}(6,0)
\Label\u{ 7}(7,0)
\Label\u{ 8}(8,0)
\Label\u{ 9}(9,0)
\Label\u{ 10}(10,0)
\Label\u{ 11}(11,0)
\Label\u{ 12}(12,0)
\Label\u{ 13}(13,0)
\Label\l{ 1}(0,1)
\Label\l{ 2}(0,2)
\Label\l{ 3}(0,3)
\Label\l{ 4}(0,4)
\Label\l{ 5}(0,5)
\Label\l{ 6}(0,6)
\Label\l{ 7}(0,7)
\Label\l{ 8}(0,8)
\Label\l{ 9}(0,9)
\Label\l{ 10}(0,10)
\Label\l{ 11}(0,11)
\Label\l{ 12}(0,12)
\Label\l{ 13}(0,13)
\Label\l{ 14}(0,14)
\Label\l{ 15}(0,15)
\normalsize
\Label\ro{\ \ A^{(1)}}(0,5)
\Label\ro{\ \ A^{(2)}}(0,4)
\Label\ro{\ \ A^{(3)}}(0,2)
\Label\u{E^{(1)}}(10,16)
\Label\u{E^{(2)}}(12,16)
\Label\u{E^{(3)}}(13,16)
\hskip7.7cm
$$
\caption{}
\label{fig:9}
\end{figure}

According to Theorem~\ref{thm:1}, we have
\begin{align*}
t_1&=13-5-3-
\max\big(\{-5,13-15-3\}\cup
\{4-6,8-9,10-13\}\big)=6,\\
t_2&=13-4-1-
\max\big(\{-5+2,13-15-3+2,-4,13-15-1\}\\
&\kern5cm
\cup
\{4-6+2,8-9+2,10-13+2\}\big)=7,\\
t_3&=13-2-0-
\max\big(\{-5+4,13-15-3+4,-4+2,13-15-1+2,\\
&\kern4cm
-2,13-15-0\}\cup
\{4-6+4,8-9+4,10-13+4\}\big)=8.
\end{align*}
Thus, the maximum number of NE-turns a family $(P_1,P_2,P_3)$ of
non-intersecting lattice paths, where $P_i$ runs from $A^{(i)}$ to
$E^{(i)}$, $i=1,2,3$, can have is $6+7+8=21$, with the individual
paths having at most $6$, $7$, $8$ NE-turns, respectively. 
An example of such a
family is shown in Figure~\ref{fig:9}.
\end{Example}

In view of Remark~\ref{rem:1}.(2), 
Corollary~\ref{cor:1} and Theorem~\ref{thm:1}, after little
simplification,
we obtain the following formula for the $a$-invariant
of one-sided ladder determinantal rings.

\begin{Corollary} \label{cor:2}
Let $Y=(Y_{i,j})_{0\le i\le A,\ 0\le j\le B}$ be
an upper ladder, and let $L$ be the associated ladder region.
Let $M=[u_1,u_2,\dots,u_n\mid v_1,v_2,\dots,v_n]$
be a bivector of positive integers with 
$u_1<u_2<\dots<u_n$ and $v_1<v_2<\dots<v_n$.
Then the $a$-invariant of
the ladder determinantal ring $R_M(Y)=K[Y]/I_M(Y)$ is given by
$$
\sum_{i=1}^n t_i-(A+B+1)n,
$$
where
\begin{multline*}
t_i=
\min\big(\{B+u_{n-j+1}-1-2(i-j),A+v_{n-j+1}-1-2(i-j):1\le j\le i\}\\
\cup
\{B-x_k+y_k-2(i-1):1\le k\le m\}\big),
\end{multline*}
where $S_k=(x_k,y_k)$, $k=1,2,\dots,m$, runs through 
the inwards corners of the upper ladder $L$.
\end{Corollary}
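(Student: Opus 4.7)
The plan is to combine Corollary~\ref{cor:1} with Theorem~\ref{thm:1} and Remark~\ref{rem:1}.(3), all applied to the upper-ladder case. By Corollary~\ref{cor:1}, the $a$-invariant of $R_M(Y)$ equals
$$\deg\bigl(\GF(\P_L^+(\mathbf{A}\to\mathbf{E});z^{\NE(\cdot)})\bigr)-d.$$
Since $Y$ is an upper ladder, Remark~\ref{rem:1}.(3) does two things simultaneously: it yields the explicit value
$$d=(A+B+3)n-\sum_{i=1}^n(u_i+v_i),$$
and it reduces the \emph{a priori} complicated requirement that the NE-turns of $P_i$ lie in $L^{(i)}\setminus B^{(i)}$ to the much simpler condition that each $P_i$ lies entirely inside $L$. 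As noted in the paragraph introducing Lemma~\ref{lem:1}, this last condition is equivalent to requiring every $P_i$ to pass weakly south-east of each inwards corner $S_k=(x_k,y_k)$ of the upper boundary of~$L$.

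With this reformulation in place, Theorem~\ref{thm:1} applies directly, using $a_i=u_{n-i+1}-1$ and $b_i=v_{n-i+1}-1$ (to match the definitions $A^{(i)}=(0,u_{n-i+1}-1)$ and $E^{(i)}=(B-v_{n-i+1}+1,A)$ from Theorem~\ref{thm:3}); the hypotheses $a_1>\cdots>a_n$ and $b_1>\cdots>b_n$ hold because $u_1<\cdots<u_n$ and $v_1<\cdots<v_n$. Theorem~\ref{thm:1} then gives $\deg(\GF)=\sum_{i=1}^n t_i^{\mathrm{T}}$, where $t_i^{\mathrm{T}}$ is the quantity from that theorem with the substitutions just made.

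The remaining step is algebra. I would distribute the minus sign past the $\max$ in Theorem~\ref{thm:1} to turn it into a $\min$, and then absorb the constant $B-a_i-b_i=B-u_{n-i+1}-v_{n-i+1}+2$ inside it. The three families of terms inside the $\min$ transform respectively into
$$B+u_{n-j+1}-1-2(i-j)-(u_{n-i+1}+v_{n-i+1}-2),$$
$$A+v_{n-j+1}-1-2(i-j)-(u_{n-i+1}+v_{n-i+1}-2),$$
$$B-x_k+y_k-2(i-1)-(u_{n-i+1}+v_{n-i+1}-2),$$
so the common additive shift $-(u_{n-i+1}+v_{n-i+1}-2)$ factors out of the $\min$. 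Summing over $i$ and subtracting $d$, these $n$ shifts combine with $-d$ to yield
$$\sum_{i=1}^n\bigl(2-u_{n-i+1}-v_{n-i+1}\bigr)-(A+B+3)n+\sum_{i=1}^n(u_i+v_i)=2n-(A+B+3)n=-(A+B+1)n,$$
while what is left inside the $\min$ is exactly the expression for $t_i$ given in the statement.

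The only delicate point, and hence the main obstacle I anticipate, is the index bookkeeping: keeping the relabeling $j\leftrightarrow n-j+1$ between Theorem~\ref{thm:1}'s parameters $(a_j,b_j)$ and the bivector entries $(u_j,v_j)$ straight, and watching the constant contribution of $d$ consolidate correctly against the shifts pulled out of each $\min$. Once this is traced carefully (the phrase ``after little simplification'' in the statement refers precisely to this computation), the corollary follows with no further ingredients.
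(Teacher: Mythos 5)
Your proposal is correct and follows exactly the route the paper takes: the paper derives Corollary~\ref{cor:2} by combining Corollary~\ref{cor:1}, Theorem~\ref{thm:1} (with $a_i=u_{n-i+1}-1$, $b_i=v_{n-i+1}-1$) and the formula for $d$ from Remark~\ref{rem:1}, ``after little simplification.'' Your algebra spelling out that simplification — converting the $\max$ to a $\min$, extracting the common shift $-(u_{n-i+1}+v_{n-i+1}-2)$, and consolidating it against $-d$ to produce $-(A+B+1)n$ — checks out.
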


\begin{Example}
Let $A=15$, $B=13$, $n=3$, $L$ the ladder region indicated by the dots in
Figure~\ref{fig:9}, and $M=[3,5,6\mid 1,2,4]$.
(The reader should observe that $L$ is also the ladder region in
Figure~\ref{fig:5a}.)
The ladder can be ``described" by the inwards corners
$S_1=(4,6)$, $S_2=(8,9)$, and $S_3=(10,13)$.
(The reader should observe that the above choice of parameters leads to
the maximisation problem of Example~\ref{ex:1}.) 
We have
\begin{align*}
t_1&=
\min\big(\{13+6-1,15+4-1\}\\
&\kern4cm
\cup
\{13-4+6,13-8+9,13-10+13\}\big)=14,\\
t_2&=
\min\big(\{13+6-1-2,15+4-1-2,13+5-1,15+2-1-2\}\\
&\kern4cm
\cup
\{13-4+6-2,13-8+9-2,13-10+13-2\}\big)=12,\\
t_3&=
\min\big(\{13+6-1-4,15+4-1-4,13+5-1-2,15+2-1-2,\\
&\kern1cm
13+3-1,15+1-1\}
\cup
\{13-4+6-4,13-8+9-4,13-10+13-4\}\big)\\
&=10.
\end{align*}
Hence, the $a$-invariant
of the corresponding ladder determinantal ring $R_M(Y)$ equals
$$
(14+12+10)-(15+13+1)\cdot3=-51.
$$
\end{Example}

For the sake of comparison with Conca's formula \cite{ConcAD}
for the $a$-invariant of determinantal rings cogenerated by
a given minor (without ladder restriction), we provide the
specialisation of our result to that case separately.

\begin{Corollary} \label{cor:3}
Let $M=[u_1,u_2,\dots,u_n\mid v_1,v_2,\dots,v_n]$
be a bivector of positive integers with 
$u_1<u_2<\dots<u_n$ and $v_1<v_2<\dots<v_n$.
Then the $a$-invariant of
the determinantal ring $R_M(X)=K[X]/I_M(X)$ is given by
$$
\sum_{i=1}^n t_i-(A+B+1)n,
$$
where
$$
t_i=
\min\{B+u_{n-j+1}-1-2(i-j),A+v_{n-j+1}-1-2(i-j):1\le j\le i\}.
$$
\end{Corollary}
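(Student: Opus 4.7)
The plan is to obtain Corollary~\ref{cor:3} as the special case of Corollary~\ref{cor:2} in which the ladder region $L$ is the entire rectangle $\{(x,y)\in\Z^2 : 0\le x\le B,\ 0\le y\le A\}$, corresponding to the full matrix $X$ (i.e., $Y_{i,j}=X_{i,j}$ for every $i,j$). This matrix is trivially an upper ladder, and the hypothesis of Theorem~\ref{thm:3} that the points $A^{(i)}$ and $E^{(i)}$ lie inside $L$ is automatic, since every lattice point of the relevant range lies in the full rectangle. Therefore Corollary~\ref{cor:2} is applicable.

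The key step is to verify that the full rectangle has no inwards corners. Recall that an inwards corner of $L$ is a point $(x,y)\in L$ with $(x-1,y),(x,y+1)\in L$ but $(x-1,y+1)\notin L$. If $L$ is the full rectangle, then the conditions $(x-1,y)\in L$ and $(x,y+1)\in L$ force $x\ge 1$ and $y\le A-1$; but these inequalities already guarantee that $(x-1,y+1)\in L$, so no inwards corner exists. Hence the parameter $m$ in Corollary~\ref{cor:2} can be taken to be $0$.

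With $m=0$, the second set
$\{B-x_k+y_k-2(i-1):1\le k\le m\}$
appearing in the formula for $t_i$ in Corollary~\ref{cor:2} is empty, so it drops out of the minimum, leaving exactly the expression for $t_i$ stated in Corollary~\ref{cor:3}. The outer formula $\sum_{i=1}^n t_i - (A+B+1)n$ is identical in both statements, which completes the reduction.

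There is essentially no obstacle to overcome: the entire argument is a direct specialization of Corollary~\ref{cor:2}, and the only nontrivial point is the one-line verification that the full rectangle has no inwards corners. I would expect the author's proof to consist of essentially the same brief reduction.
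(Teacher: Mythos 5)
Your reduction is correct and is exactly what the paper intends: Corollary~\ref{cor:3} is stated there as the direct specialisation of Corollary~\ref{cor:2} to the full rectangle (no separate proof is given), and your one-line check that the full rectangle has no inwards corners, so that the set indexed by $k$ is empty and drops out of the minimum, is the only point that needed verifying.
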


\begin{Example}
We let $B\le A$ and choose $M=[1,2,\dots,n\mid 1,2,\dots,n]$.
(For comparison, see \cite[Ex.~2.8]{ConcAB}.)
Then we obtain
\begin{align*}
t_i&=
\min\{B+n-2i+j,A+n-2i+j:1\le j\le i\}\\
&=B+n-2i+1.
\end{align*}
Hence, the $a$-invariant of $R_M(X)$ equals
$$
\sum_{i=1}^n(B+n-2i+1)-(A+B+1)n=Bn-(A+B+1)n=-(A+1)n,
$$
which is in accordance with \cite{GraeAA} and \cite[Cor.~1.5
with $f_i=0$ and $e_i=1$ for all~$i$]{BrHeAA}.
\end{Example}

\begin{Example}
We choose $M=[u_1,u_2,\dots,u_n\mid 1,2,\dots,n]$
with $u_i+1<u_{i+1}$ for $i=1,2,\dots,n-1$, and with $A-u_n\ge B-n$.
(For comparison, see \cite[Ex.~2.9]{ConcAB}.)
Then we obtain
\begin{align*}
t_i&=
\min\{B+u_{n-j+1}-1-2(i-j),A+n-2i+j:1\le j\le i\}\\
&=
\min\{B+u_{n-i+1}-1,A+n-2i+1\}.
\end{align*}
By our assumptions, we have 
$$A+n-2i+1\ge B+u_n-2i+1\ge B+u_{n-i+1}+2(i-1)-2i+1=B+u_{n-i+1}-1.$$
Hence, we have $t_i=B+u_{n-i+1}-1$, and
the $a$-invariant of $R_M(X)$ equals
\begin{align*}
\sum_{i=1}^n(B+u_{n-i+1}-1)-(A+B+1)n&=Bn+
\sum_{i=1}^nu_i-(A+B+2)n\\
&=\sum_{i=1}^nu_i-(A+2)n,
\end{align*}
which is in accordance with the result in \cite[Ex.~2.9]{ConcAB}.
\end{Example}

\section{How to achieve the maximum number of NE-turns: the two-sided
case}
\label{sec:aux2}

We now turn our attention to the two-sided case. 
We restrict our attention to the case where all $B^{(i)}$'s
lie completely in $L$, in order to avoid technical difficulties
resulting from the condition involving the $B^{(i)}$'s in
Theorem~\ref{thm:3}.

Again, 
we begin with the task of maximising the number of NE-turns of a
single path in a given ladder region, which is now two-sided.
The next lemma provides an algorithmic solution to the problem of
finding the maximum number of NE-turns of paths from a given
starting point to a given end point staying in a two-sided
ladder region. While, in view of Remark~\ref{rem:1}.(4), 
the set of lattice paths that we have to consider may actually be larger
(namely, it may include some paths which do not lie completely in the
ladder region), we will see later that it suffices to consider those
paths which do stay in the ladder.

The reader is advised to read the statement
below {\it in parallel\/} with the proof sketch that follows the
statement. Only then the motivation and meaning of the
individual steps of the algorithm will become apparent. While a
formal proof could be given, it would be unenlightening. This is
the reason we chose to provide a proof {\it sketch}, emphasising
the (geometric) ideas behind the construction.

\begin{Lemma} \label{lem:3}
Let $A=(a,b)$, $B=(c,d)$, and $S_i=(x_i,y_i)$ and $T_j=(z_j,w_j)$ be
lattice points with $a\le x_i,z_i\le c$ and $b\le y_i,w_i\le d$,
for $i=1,2,\dots,p$ and $j=1,2,\dots,q$. 
The maximum number of NE-turns which a lattice path from $A$ 
to $B$ which stays weakly south-east of $S_i$, for $i=1,2,\dots,p$, 
and weakly north-west of $T_j$, for $j=1,2,\dots,q$, the NE-turns
being different from any of the points $T_j$, $j=1,2,\dots,q$, can
attain can be
computed in the following manner:
\begin{enumerate}
\item \label{it1} 
Form the point set
$$
P_1=\{A,B\}\cup\{S_i:1\le i\le p\}
\cup\{T_j:1\le j\le q\}.
$$
\item \label{it2} 
Replace each point $(x,y)\in P_1$ by $(x+y,x-y)$.
Call the new point set $P_2$.
\item \label{it3} 
Order the points in $P_2$ according to the size of their first
coordinates, from smallest to largest. In the case of ties, order the
corresponding points arbitrarily. Let the result of this ordering be
$$
P_2=\{\hat A,U_1,U_2,\dots,U_{p+q},\hat B\}.
$$
Each $U_i$ is labelled $S$ or $T$, depending on whether it came from
a point $S_j$ or a point $T_j$, respectively. The last point, $\hat B$,
which came from $B$ is labelled by $S$ {\it and\/} $T$. 
\item \label{it4} 
Successively, form a new point set $P_3$. Initialise
$P_3=\{\hat A\}$. Scan through $U_1,U_2,\dots$ until a point labelled by $S$
is found with larger second coordinate than $A$, or until a point
labelled by $T$ is found with smaller second coordinate. If such a point
is found, add it to $P_3$.
If the added point
was $\hat B$, continue with \eqref{it6}, otherwise continue with
\eqref{it5}.
\item \label{it5}
If the last point added to $P_3$ was a point labelled with
$S$, say $C=U_i$,
then continue to scan through $U_{i+1},U_{i+2},\dots$, looking for a point
labelled by $S$ with larger second coordinate or for a point labelled
by $T$ with smaller second coordinate. If such a point is found,
then, in the first case, replace $C$ by 
this point, while, in
the second case, add 
the point found to $P_3$.
If the added point
was $\hat B$, continue with \eqref{it6}, otherwise continue with
\eqref{it5}. 

If the last point added to $P_3$ was a point labelled with
$T$, say $C=U_i$,
then continue to scan through $U_{i+1},U_{i+2},\dots$, looking for a point
labelled by $S$ with larger second coordinate or for a point labelled
by $T$ with smaller second coordinate. If such a point is found,
then, in the first case, add this 
point to $P_3$,
while, in the second case, replace $C$ by the 
point found.
If the added point
was $\hat B$, continue with \eqref{it6}, otherwise continue with
\eqref{it5}. 
\item \label{it6}
Let
$$
P_3=\{V^{(0)}=\hat A,V^{(1)},\dots,V^{(s)},V^{(s+1)}=\hat B\}.
$$
Compute the sum
\begin{equation} \label{eq:10} 
\frac {1} {2}\sum _{i=0} ^{s}
\min\left\{
V^{(i+1)}_1+V^{(i+1)}_2
-V^{(i)}_1-V^{(i)}_2,
V^{(i+1)}_1-V^{(i+1)}_2
-V^{(i)}_1+V^{(i)}_2\right\}.
\end{equation}
\end{enumerate}

The maximum is for instance realised by the path which connects
the points $S_i$ corresponding to the points in $P_3$ 
by zig-zag paths prepended by a horizontal
or vertical straight piece, as is necessary. More precisely,
given two successive points in $P_3$, the corresponding points
$S_i$ and $T_j$ {\em(}respectively $T_j$ and $S_i${\em)} 
are connected by a horizontal or
vertical straight piece {\em(}which may have length~$0${\em)} 
followed by a zig-zag path {\em(}which may also be empty{\em)} with a 
horizontal step at its end; cf.\ Figure~{\em\ref{fig:7}}.
\end{Lemma}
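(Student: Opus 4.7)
The plan is to work in the rotated frame $\varphi(x,y) := (x+y,\,x-y)$. Under $\varphi$, a lattice path from $A$ to $B$ becomes a walk from $\varphi(A)$ to $\varphi(B)$ in the $(X,Y)$-plane each of whose unit steps is either $(1,+1)$ (image of a horizontal step) or $(1,-1)$ (image of a vertical step); in particular, the walk is strictly increasing in $X$, has a unique $Y$-value at each integer $X$ in its range, and a NE-turn of the original path is precisely a strict local minimum of $Y$ in the rotated walk. Using the Lipschitz property of $\pm1$-walks, I would verify that the constraint of staying weakly south-east of $S_i=(x_i,y_i)$ reduces to the single-point inequality $Y\ge x_i-y_i$ at $X=x_i+y_i$; symmetrically, staying weakly north-west of $T_j=(z_j,w_j)$ reduces to $Y\le z_j-w_j$ at $X=z_j+w_j$. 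This explains why step~\eqref{it2} applies $\varphi$ and why step~\eqref{it3} orders the transformed points by first coordinate: that is the order in which the walk meets the constraints.

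Next I would record the elementary building block, essentially Lemma~\ref{lem:1} rewritten in rotated form: a $\pm1$-walk from $(X_1,Y_1)$ to $(X_2,Y_2)$ has at most
$$\tfrac12\min\bigl((X_2-X_1)+(Y_2-Y_1),\,(X_2-X_1)-(Y_2-Y_1)\bigr)$$
local minima, with equality attained by a maximal zig-zag flanked by a monotone straight piece. Consequently, once one has identified a sequence of waypoints $V^{(0)}=\varphi(A),V^{(1)},\dots,V^{(s+1)}=\varphi(B)$ through which the optimal walk can be forced to pass, summing this bound segment by segment gives exactly the expression \eqref{eq:10}, and concatenating the corresponding optimal segments realises it.

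The crux is therefore the waypoint-selection rule in steps~\eqref{it4}--\eqref{it5}. The geometric picture I would push is that the binding constraints form an alternating staircase in the rotated plane: a maximal run of $S$-constraints with strictly increasing $Y$-values is followed by a maximal run of $T$-constraints with strictly decreasing $Y$-values, and so on. The greedy replace-or-add rule maintains precisely this invariant---while the last selected waypoint is of type $S$, a later $S$-point dominates it iff its $Y$-value is larger (so we replace), and a $T$-point terminates the run iff its $Y$-value is smaller than the current $S$ (so we append and switch regime); the symmetric rule governs $T$-runs. I would then check that every non-selected constraint is dominated by some selected one of the same type, hence is automatically satisfied by the piecewise construction, and that the requirement ``no NE-turn at any $T_j$'' is honoured because selected $T$-waypoints occur as local maxima of the walk, never as local minima.

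The hard part will be the matching upper bound: one must show that any valid walk is forced, at the selected waypoints, into $Y$-values that are no more favourable than those realised by the construction (no higher at $T$-waypoints, no lower at $S$-waypoints), at which point the per-segment bound of Lemma~\ref{lem:1} can simply be summed. I expect this to follow by induction along the sorted list of constraint points using the staircase invariant, but the case analysis over the four possible consecutive types ($S$-after-$S$, $T$-after-$S$, $S$-after-$T$, $T$-after-$T$) is the tedious kernel of the argument---which is presumably why the authors themselves opt for a sketch emphasising the geometric picture rather than a formal line-by-line verification.
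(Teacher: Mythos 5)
Your proposal is correct and follows essentially the same route as the paper's own proof sketch: the rotation $(x,y)\mapsto(x+y,x-y)$, the reduction of each ladder constraint to a one-point condition on its antidiagonal, the greedy selection of the binding ``gates'' alternating between $S$- and $T$-type, and the per-segment application of Lemma~\ref{lem:1} summed to give \eqref{eq:10}. Your $\pm1$-walk/local-minimum formalisation and the ``staircase invariant'' are just a more explicit rendering of the paper's slalom picture, and like the authors you stop short of the full case analysis for the matching upper bound, which they likewise only sketch.
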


\begin{proof}[Sketch of proof]
While explaining what is behind the individual steps of the above
algorithm, we illustrate each of them by the running example in
which 
$A=(0,1)$,
$S_1=(2,2)$,
$S_2=(4,3)$,
$S_3=(2,5)$,
$S_4=(8,9)$,
$S_5=(10,10)$,
$S_6=(11,11)$,
$T_1=(4,1)$,
$T_2=(5,1)$,
$T_3=(6,1)$,
$T_4=(5,2)$,
$T_5=(5,5)$,
$T_6=(5,6)$,
$T_7=(8,7)$,
$T_8=(11,9)$,
$T_9=(13,10)$,
$B=(12,14)$; see Figure~\ref{fig:7}.
Clearly, there is nothing to be said about Step~(\ref{it1}) of the algorithm.

\begin{figure}[h]
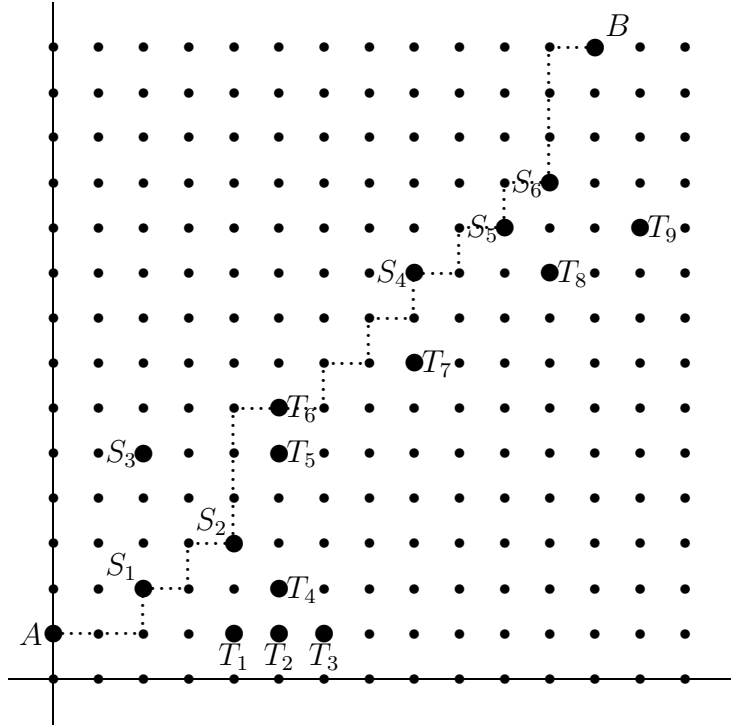

$$\Koordinatenachsen(15,15)(0,0)
\Gitter(15,15)(0,0)
\SPfad(0,1),1121212221121212121212221\endSPfad
\DickPunkt(0,1)
\DickPunkt(4,1)
\DickPunkt(5,1)
\DickPunkt(6,1)
\DickPunkt(2,2)
\DickPunkt(5,2)
\DickPunkt(4,3)
\DickPunkt(2,5)
\DickPunkt(5,5)
\DickPunkt(5,6)
\DickPunkt(8,9)
\DickPunkt(8,7)
\DickPunkt(10,10)
\DickPunkt(11,11)
\DickPunkt(11,9)
\DickPunkt(13,10)
\DickPunkt(12,14)
\Label\l{A}(0,1)
\Label\u{T_1}(4,1)
\Label\u{T_2}(5,1)
\Label\u{T_3}(6,1)
\Label\lo{S_1}(2,2)
\Label\r{T_4}(5,2)
\Label\lo{S_2}(4,3)
\Label\l{S_3}(2,5)
\Label\r{T_5}(5,5)
\Label\r{T_6}(5,6)
\Label\l{S_4}(8,9)
\Label\r{T_7}(8,7)
\Label\l{S_5}(10,10)
\Label\l{S_6}(11,11)
\Label\r{T_8}(11,9)
\Label\r{T_9}(13,10)
\Label\ro{B}(12,14)
\hskip8cm
$$
\caption{\small A lattice path with maximal number of NE-turns between
the $S_i$'s and the $T_i$'s}
\label{fig:7}
\end{figure}

From the arguments which proved Lemma~\ref{lem:1}, we know that
a lattice path which will attain the maximum number of NE-turns
should be as close as possible to a zig-zag path. So, the ``preferred"
(rough)
direction for our path is north-east, that is, the direction given by
the vector $(1,1)$. What may prevent us from going into that direction
from the very beginning until the very end is, first, the location
of starting and end point ($A$~and $B$ may not lie on a line parallel
to $(1,1)$), and, second, the points $S_i$, $i=1,2,\dots,p$, 
and $T_i$, $i=1,2,\dots,q$. If we imagine that we look
into the direction $(1,1)$ and imagine that we move into that direction,
then the situation that we encounter is as the one of a skier in a slalom:
there are some ``gates" which have to be passed on the right
(the points $S_i$) and some other gates which have to be passed
on the left (the points $T_i$). 

If we are on a particular line of the form $x+y=C$ (where $C$ is a fixed
integer), then along this line we may find some points $S_i$ and
some points $T_i$. Since the $S_i$'s have to be passed on the right
and the $T_i$'s on the left, along this fixed line it is clearly only
the right-most among the $S_i$'s and the left-most among the
$T_i$'s which are relevant. For example, if we consider the line
$x+y=7$ in our running example, then along this line we find
$S_2,S_3,T_3,T_4$, of which only $S_2$ (to be passed on the right) 
and $T_4$ (to be passed on the left) are really relevant, the other
points along this line can be disregarded.

In order to find a path from $A=(a,b)$ to $B=(c,d)$ which passes
weakly south-east (``to the right") of the points $S_i$ and
weakly north-west (``to the left") of the points $T_i$, we would
start at $A$ --- which is on the line $x+y=a+b$, and then proceed
to some point on the line $x+y=a+b+1$, then to some point on the
line $x+y=a+b+2$, \dots, and finally to $B$ --- which is on the line
$x+y=c+d$, and on each of these antidiagonal lines we will take
care that we pass to the right of the right-most point $S_i$ and
to the left of the left-most point $T_i$ on the line.

Given these observations, we can now understand what the meaning
of Step~(\ref{it2}) of the algorithm is. Upon replacement of a point
$(x,y)$ by $(x+y,x-y)$, the first coordinate, $x+y$, tells us
on which antidigonal line this point lies, and the second coordinate,
$x-y$, tells us how far right or left on that line the point lies.
The ordering of the points in $P_2$ performed in Step~(\ref{it3}) 
is then done so that first come
the points which are on the antidiagonal line $x+y=a+b$, then the
ones on $x+y=a+b+1$, \dots, and finally the ones on $x+y=c+d$.
This is exactly the order in which we have to consider these ``gates"
as we are advancing during our ``slalom run." In our running example,
we would obtain
\begin{multline*}
P_2=\{
(1,-1),
(4,0)_S,
(5,3)_T,
(6,4)_T,
(7,1)_S,
(7,-3)_S,
(7,5)_T,
(7,3)_T,
(10,0)_T,\\
(11,-1)_T,
(15,1)_T,
(17,-1)_S,
(20,2)_T,
(20,0)_S,
(22,0)_S,
(23,3)_T,
(26,-2)_{S,T}
\}.
\end{multline*}
(The labelling is indicated by subscripts.)

Steps~(\ref{it4}) and (\ref{it5}) take care that only 
``gates" are kept which are
relevant. The relevant ones are stored in the set $P_3$, 
while the rest of them is disregarded. In addition to the
above observation that along a line $x+y=C$ it is only the right-most
$S_i$ and the left-most $T_i$ which are relevant, there may be more
redundant points. Namely, in the proof of Lemma~\ref{lem:1} for 
the one-sided ladder case, we observed that only points $S_i=(x_i,y_i)$
with maximal $x_i-y_i$ are relevant, the other points can be ignored
(cf.\ \eqref{eq:1}). The same argument holds here. 
As a consequence, if the last point added to $P_3$ (=~the last ``relevant"
point) was a point corresponding to some $S_i$ (``a point labelled by
$S$"), then in Step~(\ref{it5}) we search for some 
$S_j$ which is more north-east
than $S_i$, and, if we find such an $S_j$, we may replace $S_i$ by $S_j$.
This is analogous for the points $T_i$. 

However, since we have a two-sided ladder region, while advancing we
must consider both sides. As in a real slalom, we may be forced to
``correct" our direction of movement if we encounter a $T$-point
which is more to the left than the last $S$-point, and vice versa.
This is also reflected in the instructions in Step~(\ref{it5}). 

For example, returning to our running example in Figure~\ref{fig:7},
when we start our ``slalom run" in $A$, then our first obstacle
which prevents us from doing a zig-zag path starting in $A$ is
the point $S_1$, which has to be passed on the right. Indeed, when
we apply Step~(\ref{it4}) of the algorithm to our example, then,
after initialisation $P_3=\{(1,-1)\}$, we encounter $(4,0)_S$ (which
corresponds to $S_1$) which
has larger second coordinate than $(1,-1)$ and is labelled by $S$,
and which consequently is added to $P_3$, so that we arrive at
$P_3=\{(1,-1),(4,0)\}$. The points $T_1$ and $T_2$ are no
obstacles, and, indeed, the corresponding points $(5,3)_T$ and $(6,4)_T$
in $P_2$, which are considered next during the execution of
Step~(\ref{it5}), do not have smaller second coordinate than $(4,0)$
and are therefore disregarded. Next comes $(7,1)_S$, corresponding to 
$S_2$. It has larger second coordinate than $(4,0)$ and, according to
Step~(\ref{it5}), replaces $(4,0)$ in $P_3$, so that we obtain
$P_3=\{(1,-1),(7,1)\}$. Indeed, once we include the restriction that
$S_2$ has to be passed on the right, the restriction imposed by $S_1$
becomes obsolete (see the dotted path; no other path between $A$
and $S_2$ can have more NE-turns). Continuing the execution of
Step~(\ref{it5}), the points $(7,-3)_S,
(7,5)_T,
(7,3)_T$ are all ignored. Then comes $(10,0)_T$ which has a smaller
second coordinate than $(7,1)$ and is labelled by $T$. According to
the algorithm, we have to add this point to $P_3$, so that we
obtain $P_3=\{(1,-1),(7,1),(10,0)\}$. Indeed, the corresponding point
$T_5$ prevents us from continuing a zig-zag path emanating from $S_2$,
we must ``correct" our move slightly to the left. Further continuation of
the application of Step~(\ref{it5}) will lead to a replacement of
$(10,0)$ by $(11,-1)$, the addition of $(20,0)$ and finally $(26,-2)$.
In other words, after application of Step~(\ref{it5}) we arrive at
$$
P_3=\{(1,-1),(7,1),(11,-1),(20,0),(26,-2)\}.
$$

After we have found the relevant ``gates" --- in form of a sequence
of points (which alternatingly correspond to points labelled by $S$ and $T$), by
Lemma~\ref{lem:1} with $n=0$, we must
now connect these points by as long as possible 
zig-zag pieces, supplemented by
some straight horizontal respectively vertical pieces as is
necessary to connect the zig-zag pieces. By the formula \eqref{eq:1}
with $n=0$, this leads directly to \eqref{eq:10}, once we recall
that the inverse of the mapping $(x,y)\mapsto(x+y,x-y)$ is given
by $(x,y)\mapsto\frac {1} {2}(x+y,x-y)$. In our running example
we obtain
$$
\frac {1} {2}
\left(
\min\left\{8,4\right\}
+\min\left\{2,6\right\}
+\min\left\{10,8\right\}
+\min\left\{4,8\right\}
\right)
=2+1+4+2=9.
$$
A path which achieves $9$~NE-turns is the dotted path in
Figure~\ref{fig:7}.
\end{proof}

The analogue of Lemma~\ref{lem:2} in the two-sided case is the
following.

\begin{Lemma} \label{lem:2b}
Let $A^{(i)}=(0,a_i)$, $E^{(i)}=(B-b_i,A)$, $i=1,2,\dots,n$, 
and $C_i=(x_i,y_i)$ and $D_j=(z_j,w_j)$ be
lattice points with 
$a_1>a_2>\dots>a_n$ and $b_1>b_2>\dots>b_n$,
$0\le x_i,z_i\le B-b_n$ and $a_n\le y_i,w_i\le A$,
for $i=1,2,\dots,p$ and $j=1,2,\dots,q$.
Then, in any family $(P_1,P_2,\dots,P_n)$ of non-intersecting lattice
paths, where $P_i$ runs from $A^{(i)}$ to $E^{(i)}$ and stays
weakly south-east of $C_k$ and weakly north-west of $D_j$, for
$k=1,2,\dots,p$ and $j=1,2,\dots,q$, 
the path $P_i$ has to
pass weakly south-east of all points
\begin{multline} \label{eq:2b}
\{(i-j,a_j-i+j): j=1,2,\dots,i-1\}
\cup
\{(B-b_j+i-j,A-i+j):j=1,2,\dots,i-1\}\\
\cup
\{C_k+(i-1,-i+1):k=1,2,\dots,p\}
\end{multline}
and weakly north-east of all points
\begin{equation} \label{eq:2c}
\{D_k+(-n+i,n-i):k=1,2,\dots,q\}.
\end{equation}
\end{Lemma}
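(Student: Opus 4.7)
The plan is to mirror the proof of Lemma~\ref{lem:2} and to add a symmetric step to handle the new north-west constraints coming from the points $D_j$. Since $a_1>a_2>\dots>a_n$ and $b_1>b_2>\dots>b_n$, non-intersection forces the paths $P_1,P_2,\dots,P_n$ to be stacked north-west to south-east, with $P_i$ lying strictly north-west of $P_{i+1}$ when each path is parametrised by antidiagonals. The whole task therefore reduces to tracking how a one-sided constraint on $P_j$ propagates to a one-sided constraint on $P_i$ when $i\ne j$.

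First I would dispose of the three families of south-east points in~\eqref{eq:2b}. The points of the form $(i-j,a_j-i+j)$ and $(B-b_j+i-j,A-i+j)$ are handled exactly as in the proof of Lemma~\ref{lem:2}: the starting point $(0,a_j)$ (respectively the ending point $(B-b_j,A)$) of $P_j$ forces $P_{j+1}$ to pass weakly south-east of $(1,a_j-1)$ (respectively of $(B-b_j+1,A-1)$), and iterating the unit shift $i-j$ times yields the claim. For the points $C_k+(i-1,-i+1)$ I would prove the following propagation claim: if a path $Q$ stays weakly south-east of a point $C=(x,y)$ and $Q'$ is strictly south-east of $Q$ in the non-intersection sense, then $Q'$ stays weakly south-east of $C+(1,-1)$. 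This is a short antidiagonal-slicing argument exploiting that a monotone lattice path meets each antidiagonal $x+y=\text{const}$ in exactly one point: on the antidiagonal $x+y$ the point of $Q$ must satisfy $p\ge x$, so the point of $Q'$ on the same antidiagonal satisfies $r\ge p+1\ge x+1$; the monotonicity of $Q'$ then propagates this shift to all of~$Q'$. Applying this claim $i-1$ times, starting from the constraint on $P_1$, delivers the third part of~\eqref{eq:2b}.

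The new ingredient relative to Lemma~\ref{lem:2} is~\eqref{eq:2c}, which I read as ``weakly north-west'' (the ``north-east'' in the statement appears to be a slip, being inconsistent both with the premise and with the direction $(-n+i,n-i)$ of the shift). For this, one runs the mirror image of the propagation claim in the opposite direction through the family: from the fact that $P_n$ stays weakly north-west of $D_k$, non-intersection forces $P_{n-1}$ to stay weakly north-west of $D_k+(-1,1)$, and iterating $n-i$ times upward from $P_n$ to $P_i$ yields the shift by $(-(n-i),n-i)=(-n+i,n-i)$ asserted in~\eqref{eq:2c}. The only step I expect to need actual care is the antidiagonal-slicing verification of the propagation claim; the rest is indexing bookkeeping, and no conceptual obstacle is anticipated.
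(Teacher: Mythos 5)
Your argument is essentially the paper's own: the proof given there consists of the single sentence that the claim ``is seen in the same way as in the proof of Lemma~\ref{lem:2}'', and Lemma~\ref{lem:2} is proved by exactly the unit-shift propagation through the non-intersecting family that you describe, with your antidiagonal-slicing verification of the propagation step merely making explicit what the paper leaves as ``follows without difficulty''. Your reading of ``north-east'' in \eqref{eq:2c} as a slip for ``north-west'' is also correct, as confirmed by the way the points of \eqref{eq:2c} are used as the $T_j$'s (north-west constraints) in Theorem~\ref{thm:2} and Corollary~\ref{cor:4}.
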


\begin{proof}This is seen in the same way as in the proof of
Lemma~\ref{lem:2}.
\end{proof}

\section{The main theorem for two-sided ladder regions}
\label{sec:main2}

We now apply the findings of the previous section to obtain our
second main result.

\begin{Theorem}
\label{thm:2}
Let $A^{(i)}=(0,a_i)$, $E^{(i)}=(B-b_i,A)$, $i=1,2,\dots,n$, 
and $S_i=(x_i,y_i)$ and $T_j=(z_j,w_j)$ be
lattice points with 
$a_1>a_2>\dots>a_n$ and $b_1>b_2>\dots>b_n$,
$0\le x_i,z_i\le B-b_n$ and $a_n\le y_i,w_i\le A$,
for $i=1,2,\dots,p$ and $j=1,2,\dots,q$.
The maximum number of NE-turns which a family 
$(P_1,P_2,\dots,P_n)$ of non-intersecting lattice
paths, where $P_i$ runs from $A^{(i)}$ to $E^{(i)}$ and stays
weakly south-east of $S_k$ and weakly north-west of $T_j$, for
$k=1,2,\dots,p$ and $j=1,2,\dots,q$, can attain,
where a NE-turn of $P_i$ in a point $T_k+(-n+i,n-i)$ is not
counted, is
$$ \sum _{i=1} ^{n}t_i,
$$
where $t_i$ is the outcome of the algorithm described in
Lemma~{\em\ref{lem:3}} when applied to the special case where
$A=A^{(i)}$, $B=E^{(i)}$, the $S_i$'s being the points in
\eqref{eq:2b}, and the $T_i$'s being the points in \eqref{eq:2c}.
\end{Theorem}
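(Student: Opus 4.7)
The plan is to mirror the proof of Theorem~\ref{thm:1}, substituting the two-sided tools developed in Section~\ref{sec:aux2} for their one-sided analogues. Specifically, I would combine Lemma~\ref{lem:2b} with Lemma~\ref{lem:3}: the former translates the non-intersection requirement into individual boundary constraints on each $P_i$, and the latter solves the single-path optimisation problem under such constraints.

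First, invoking Lemma~\ref{lem:2b}, the non-intersection of $(P_1,P_2,\dots,P_n)$ forces each $P_i$ to pass weakly south-east of the shifted obstacle set \eqref{eq:2b} and weakly north-west of the shifted obstacle set \eqref{eq:2c}. These enlarged constraints depend only on $i$, so once they are in place the $n$ single-path maximisation problems decouple and may be treated independently. Applying Lemma~\ref{lem:3} to the $i$-th problem, with $A=A^{(i)}$, $B=E^{(i)}$, the $S$-points taken from \eqref{eq:2b} and the $T$-points from \eqref{eq:2c}, produces exactly the value $t_i$ specified in the statement. Summing over $i$ yields the upper bound $\NE(\mathbf{P})\le \sum_{i=1}^{n}t_i$. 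The provision in the theorem that an NE-turn of $P_i$ at a point $T_k+(-n+i,n-i)$ is not counted is imposed precisely so as to line up with the corresponding exclusion built into Lemma~\ref{lem:3}.

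For the matching lower bound, I would construct an explicit non-intersecting family attaining $\sum t_i$. For each $i$, the constructive part of Lemma~\ref{lem:3} produces an optimal path $P_i^{\ast}$, built from zig-zag segments connected by straight horizontal or vertical pieces. The main obstacle is to verify that these individually optimal paths can be chosen so that the resulting family $(P_1^{\ast},\dots,P_n^{\ast})$ is non-intersecting. The geometric mechanism that makes this go through is that the shifted points $(i-j,a_j-i+j)$ and $(B-b_j+i-j,A-i+j)$ appearing in \eqref{eq:2b} are the canonical ``traces'' of an optimal $P_j^{\ast}$ at offset $i-j$, so that forcing $P_i^{\ast}$ weakly south-east of them keeps it from crossing $P_j^{\ast}$; the symmetric role is played by \eqref{eq:2c} for the paths indexed above $i$. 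A short induction on $i$, exploiting the flexibility in Lemma~\ref{lem:3} to break ties in the ordering step and to choose where to insert the straight pieces between zig-zag segments, then delivers a family with the required non-intersection property, each of whose members achieves its individual maximum~$t_i$. Combining the two inequalities gives the claimed equality.
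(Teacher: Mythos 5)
Your proposal is correct and follows essentially the route the paper intends: the paper states Theorem~\ref{thm:2} without an explicit written proof, the understood argument being exactly the combination of Lemma~\ref{lem:2b} (to decouple the non-intersection condition into per-path boundary constraints) with Lemma~\ref{lem:3} (to solve each single-path maximisation), in direct analogy with the proof of Theorem~\ref{thm:1} from Lemmas~\ref{lem:2} and~\ref{lem:1}. If anything, you are more careful than the paper in explicitly flagging the achievability of the bound by a genuinely non-intersecting family, a point the paper leaves implicit.
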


In view of Remark~\ref{rem:1}.(2), 
Corollary~\ref{cor:1} and Theorem~\ref{thm:1}, 
the computation of the $a$-invariant
of two-sided ladder determinantal rings can be accomplished in the
following way. In the statement of the following corollary we 
need an extended meaning of the notion of inwards corners:
inwards corners along the upper boundary of a two-sided ladder
are defined in the same way as inwards corners of upper ladders,
while inwards corners along the lower boundary of a two-sided
ladder $L$ are points $(x,y)\in L$ for which both $(x+1,y)$ and $(x,y-1)$
are in $L$ but $(x+1,y-1)$ is not. For example, the inwards corners
along the lower boundary of the ladder region in Figure~\ref{fig:5} are 
$(6,8)$ and $(10,11)$.

\begin{Corollary} \label{cor:4}
Let $Y=(Y_{i,j})_{0\le i\le A,\ 0\le j\le B}$ be
a two-sided ladder, and let $L$ be the associated ladder region.
Let $M=[u_1,u_2,\dots,u_n\mid v_1,v_2,\dots,v_n]$
be a bivector of positive integers with 
$u_1<u_2<\dots<u_n$ and $v_1<v_2<\dots<v_n$.
Furthermore, let $A^{(i)}=(0,u_{n-i+1}-1)$ and $E^{(i)}=(B-v_{n-i+1}+1,A)$, 
$i=1,2,\dots,n$. 
We assume that all sets $B^{(i)}$ described in Theorem~\ref{thm:3} 
are completely contained in $L$, i.e., there exists at least one
family $\mathbf
P=(P_1,P_2,\dots,P_n)$ of non-intersecting lattice paths, $P_i$ running
from $A^{(i)}$ to $E^{(i)}$, which are completely contained in $L$.
Then the $a$-invariant of
the ladder determinantal ring $R_M(Y)=K[Y]/I_M(Y)$ is given by
$$
\sum_{i=1}^n (t_i+u_i+v_i)-(A+B+3)n,
$$
where $t_i$ is the outcome of the algorithm described in
Lemma~{\em\ref{lem:3}} when applied to the special case where
$A=A^{(i)}$, $B=E^{(i)}$, the $S_i$'s being the points in
\eqref{eq:2b} with $C_k$ running through all inwards corners
of the upper boundary of $L$, 
and the $T_i$'s being the points in \eqref{eq:2c} with $D_k$ running
through all inwards corners of the lower boundary of~$L$.
\end{Corollary}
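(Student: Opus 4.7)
The plan is to assemble the result from three previously established ingredients: Corollary~\ref{cor:1}, the formula for~$d$ in Remark~\ref{rem:1}(2), and Theorem~\ref{thm:2}. By Corollary~\ref{cor:1}, one has $a(R_M(Y)) = \deg\big(\GF(\P_L^+(\mathbf A\to\mathbf E);z^{\NE(.)})\big) - d$, so the task reduces to computing these two quantities. The standing hypothesis that every $B^{(i)}$ lies in~$L$ brings Remark~\ref{rem:1}(2) into play, giving $d = (A+B+3)n - \sum_{i=1}^n(u_i+v_i)$, so it remains to identify the degree of the generating function with $\sum_{i=1}^n t_i$.

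For this identification I would translate the constraints in Theorem~\ref{thm:3} on the path family into the south-east / north-west form required by Theorem~\ref{thm:2}. The key observation is that membership in the ladder region~$L$ is pinned down by its inwards corners: a point lies in~$L$ iff it lies weakly south-east of every upper inwards corner and weakly north-west of every lower inwards corner, exactly as exploited in Section~\ref{sec:aux1} for one-sided ladders. The non-intersecting constraints among the $n$ paths are then pushed onto each individual path by the shifts $(i-1,-i+1)$ and $(-n+i,n-i)$ of Lemma~\ref{lem:2b}, producing the families of forbidden points \eqref{eq:2b} and \eqref{eq:2c}. Feeding these into the single-path algorithm of Lemma~\ref{lem:3} is what produces the numbers $t_i$ in the statement.

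The main technical obstacle is verifying that the two-sided constraint in Theorem~\ref{thm:3}---which forbids NE-turns in $B^{(i)}$ but allows the path to dip below the lower boundary of~$L$---is captured correctly by the variant in Theorem~\ref{thm:2}, in which paths must stay weakly north-west of the shifted lower corners but NE-turns sitting exactly at those corners are not counted. Under the hypothesis $B^{(i)}\subseteq L$, I would argue that (a)~the lattice points of $B^{(i)}$ coincide with the shifts $D_k+(-n+i,n-i)$ of the lower inwards corners, so that the exception in Theorem~\ref{thm:2} literally encodes ``NE-turns in $L^{(i)}\setminus B^{(i)}$'', and (b)~any NE-maximising family can be chosen so that each $P_i$ also stays weakly north-west of those shifted corners, since straightening a downward excursion along the relevant antidiagonal can only increase (never decrease) the NE-count and never destroys the non-intersecting property when performed from the lowest path upward. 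Once (a) and (b) are granted, Theorem~\ref{thm:2} applied with these inputs gives precisely the sought maximum.

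Putting the three pieces together yields
\[
a(R_M(Y)) \;=\; \sum_{i=1}^n t_i \;-\; d \;=\; \sum_{i=1}^n(t_i+u_i+v_i) \;-\; (A+B+3)n,
\]
which is the asserted formula. The bulk of the work is thus concentrated in the geometric bookkeeping of step~(a)--(b) above; the algebraic reduction to $\deg(\GF)-d$ and the evaluation of $d$ are immediate from the cited results.
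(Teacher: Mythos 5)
Your overall architecture is exactly the paper's: reduce via Corollary~\ref{cor:1}, evaluate $d$ from Remark~\ref{rem:1}(2), and feed the shifted inwards corners from Lemma~\ref{lem:2b} into Theorem~\ref{thm:2} to identify $\deg(\GF)$ with $\sum_i t_i$. The substance of the proof lies entirely in your steps (a)--(b), and there you have one imprecision and one genuine gap.

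The imprecision: the lattice points of $B^{(i)}$ do \emph{not} coincide with the shifted lower inwards corners $D_k+(-n+i,n-i)$; the set $B^{(i)}$ is the entire lower-right boundary of $L^{(i)}$, containing $E^{(i)}_1-A^{(i)}_1+E^{(i)}_2-A^{(i)}_2+1$ points, of which the shifted corners are only a small subset. What is actually true (and what the paper observes) is that the excluded points $T_k+(-n+i,n-i)$ of Theorem~\ref{thm:2} \emph{lie in} $B^{(i)}$, and that a path constrained to stay weakly north-west of them can only have a NE-turn on $B^{(i)}$ at one of these corner points; that weaker statement suffices for the correspondence between ``NE-turns not at the $T$-points'' and ``NE-turns in $L^{(i)}\setminus B^{(i)}$''.

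The gap is in (b). You assert that straightening a downward excursion, propagated from the lowest path upward, never destroys the non-intersecting property --- but the danger the paper actually has to confront is different: the successive push-ups of $P_{n-1}, P_{n-2},\dots$ may in principle push $P_1$ \emph{above the upper boundary of $L$}, i.e., violate the south-east constraints coming from the upper inwards corners, which would take the modified family outside the class to which Theorem~\ref{thm:2} applies. This is precisely the second place where the hypothesis that all $B^{(i)}$ are contained in $L$ (equivalently, that some admissible family lies entirely in $L$) is needed: it guarantees there is enough room so that the push-up never breaches the upper boundary. In your write-up that hypothesis is invoked only to justify the formula for $d$, so the argument that the maximum over Theorem~\ref{thm:3}'s larger path class is already attained by families lying wholly in $L$ is incomplete as it stands. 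Also, the paper's replacement (rerouting the excursion along the lower boundary of $L$) \emph{preserves} rather than increases the count of NE-turns outside $B^{(i)}$; your claim that the operation ``can only increase'' the NE-count is not quite the right statement, though either version closes the argument once the upper-boundary issue is settled.
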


\begin{figure}[h]
$$
\Pfad(-1,0),1111111122222211\endPfad
\Pfad(-1,2),112111222112111\endPfad
\Pfad(-1,4),11122221111111\endPfad
\DuennPunkt(0,0)
\DuennPunkt(1,0)
\DuennPunkt(2,0)
\DuennPunkt(0,1)
\DuennPunkt(1,1)
\DuennPunkt(2,1)
\DuennPunkt(3,1)
\DuennPunkt(0,2)
\DuennPunkt(1,2)
\DuennPunkt(2,2)
\DuennPunkt(3,2)
\DuennPunkt(0,3)
\DuennPunkt(1,3)
\DuennPunkt(2,3)
\DuennPunkt(3,3)
\DuennPunkt(0,4)
\DuennPunkt(1,4)
\DuennPunkt(2,4)
\DuennPunkt(3,4)
\DuennPunkt(4,4)
\DuennPunkt(5,4)
\DuennPunkt(6,4)
\DuennPunkt(0,5)
\DuennPunkt(1,5)
\DuennPunkt(2,5)
\DuennPunkt(3,5)
\DuennPunkt(4,5)
\DuennPunkt(5,5)
\DuennPunkt(6,5)
\DuennPunkt(7,5)
\DuennPunkt(8,5)
\DuennPunkt(0,6)
\DuennPunkt(1,6)
\DuennPunkt(2,6)
\DuennPunkt(3,6)
\DuennPunkt(4,6)
\DuennPunkt(5,6)
\DuennPunkt(6,6)
\DuennPunkt(7,6)
\DuennPunkt(8,6)
\DuennPunkt(0,7)
\DuennPunkt(1,7)
\DuennPunkt(2,7)
\DuennPunkt(3,7)
\DuennPunkt(4,7)
\DuennPunkt(5,7)
\DuennPunkt(6,7)
\DuennPunkt(7,7)
\DuennPunkt(8,7)
\DuennPunkt(0,8)
\DuennPunkt(1,8)
\DuennPunkt(2,8)
\DuennPunkt(3,8)
\DuennPunkt(4,8)
\DuennPunkt(5,8)
\DuennPunkt(6,8)
\DuennPunkt(7,8)
\DuennPunkt(8,8)
\DickPunkt(7,6)
\DickPunkt(1,3)
\DickPunkt(4,6)
\DickPunkt(6,7)
\DickPunkt(2,8)
\Label\o{P_n}(-1,0)
\Label\o{P_{n-1}}(-1,2)
\Label\o{P_{n-2}}(-1,4)
\Label\u{V}(2,0)
\Label\ru{W}(7,5)
\hskip7.5cm
\hbox{
\Pfad(-1,0),1112122211121211\endPfad
\Pfad(-1,2),112122112112111\endPfad
\Pfad(-1,4),11221122111111\endPfad
\DuennPunkt(0,0)
\DuennPunkt(1,0)
\DuennPunkt(2,0)
\DuennPunkt(0,1)
\DuennPunkt(1,1)
\DuennPunkt(2,1)
\DuennPunkt(3,1)
\DuennPunkt(0,2)
\DuennPunkt(1,2)
\DuennPunkt(2,2)
\DuennPunkt(3,2)
\DuennPunkt(0,3)
\DuennPunkt(1,3)
\DuennPunkt(2,3)
\DuennPunkt(3,3)
\DuennPunkt(0,4)
\DuennPunkt(1,4)
\DuennPunkt(2,4)
\DuennPunkt(3,4)
\DuennPunkt(4,4)
\DuennPunkt(5,4)
\DuennPunkt(6,4)
\DuennPunkt(0,5)
\DuennPunkt(1,5)
\DuennPunkt(2,5)
\DuennPunkt(3,5)
\DuennPunkt(4,5)
\DuennPunkt(5,5)
\DuennPunkt(6,5)
\DuennPunkt(7,5)
\DuennPunkt(8,5)
\DuennPunkt(0,6)
\DuennPunkt(1,6)
\DuennPunkt(2,6)
\DuennPunkt(3,6)
\DuennPunkt(4,6)
\DuennPunkt(5,6)
\DuennPunkt(6,6)
\DuennPunkt(7,6)
\DuennPunkt(8,6)
\DuennPunkt(0,7)
\DuennPunkt(1,7)
\DuennPunkt(2,7)
\DuennPunkt(3,7)
\DuennPunkt(4,7)
\DuennPunkt(5,7)
\DuennPunkt(6,7)
\DuennPunkt(7,7)
\DuennPunkt(8,7)
\DuennPunkt(0,8)
\DuennPunkt(1,8)
\DuennPunkt(2,8)
\DuennPunkt(3,8)
\DuennPunkt(4,8)
\DuennPunkt(5,8)
\DuennPunkt(6,8)
\DuennPunkt(7,8)
\DuennPunkt(8,8)
\DickPunkt(7,6)
\DickPunkt(1,3)
\DickPunkt(4,6)
\DickPunkt(6,7)
\DickPunkt(3,8)
\Label\o{P'_n}(-1,0)
\Label\o{P'_{n-1}}(-1,2)
\Label\o{P'_{n-2}}(-1,4)
\Label\u{V}(2,0)
\Label\ru{W}(7,5)
\hskip5cm
}
$$
\centerline{\small a. non-intersecting lattice paths
\hskip2cm
b. after push-up of paths}
\centerline{\small not entirely in $L$\hskip8cm}
\caption{Illustration of the ``push-up'' of lattice paths in
the proof of Corollary 17}
\label{fig:8}
\end{figure}

\begin{proof}
Given Theorem~\ref{thm:2}, this would be obvious, if there were not
the subtle difference between the conditions imposed on the
non-intersecting lattice paths in Theorem~\ref{thm:2} and the ones
in Theorem~\ref{thm:3}: in the latter theorem, lattice paths are 
allowed to leave the ladder region $L$ (cf.\ Remark~\ref{rem:1}.(4)),
while this is not the case if we apply Theorem~\ref{thm:2}
with the $C_k$'s and the $D_k$'s the inward corners of $L$. 
However, it is
easy to see that families of non-intersecting lattice paths, where
some of the paths protrude outside of $L$, all of its paths having
their NE-turns in $L^{(i)}\backslash B^{(i)}$, cannot achieve a higher
total number of NE-turns than families $(P_1,P_2,\dots,P_n)$ 
of paths which stay completely
inside $L$, even if one does not count the NE-turns of $P_i$
which lie in $B^{(i)}$. Indeed, let us consider a
family of non-intersecting lattice paths, where some its paths
run below the lower boundary of $L$; see the example on the left of
Figure~\ref{fig:8}. There, the ladder region $L$ is indicated by 
thin dots, and NE-turns of paths are indicated by bold dots.
Clearly, this means in particular that the
lowest path, $P_n$, has to run below $L$. Let $V$ be the
lattice point in $L$ which is the last point before $P_n$
leaves $L$, and let $W$ be the lattice point in $L$
where $P_n$ reenters $L$ after its ``excursion"; see
Figure~\ref{fig:8}. We replace the portion of $P_n$ lying
outside of $L$ by the path between $V$ and $W$ travelling along
the lower boundary of $L$. Since the paths in the family should
remain non-intersecting, we may have to ``push up" $P_{n-1}$,
$P_{n-2}$, etc., at the same time; see the right half of Figure~\ref{fig:8}.
These operations do not change the number of NE-turns of $P_i$ outside
the set $B^{(i)}$, $i=n,n-1,\dots$. 
Moreover, it should be observed that the points
$T_k+(-n+i,n-i)$ that would 
not be counted as NE-turns in
Theorem~\ref{thm:2} are points lying in $B^{(i)}$, so that this
corresponds well with the previous observation.
We do this ``push-up" for all portions of paths
which lie below $L$. In principle, these ``push-ups" may push
up $P_1$ beyond the upper boundary of~$L$. However, since
we assumed that all sets $B^{(i)}$ are completely contained in $L$,
this cannot happen. 

This completes the proof.
\end{proof}

\begin{Example}
We illustrate Corollary~\ref{cor:4} by choosing 
$A=15$, $B=13$, $n=3$, $L$ the ladder region indicated by the dots in
Figure~\ref{fig:10}, and $M=[3,5,6\mid 3,4,6]$.
(The reader should observe that $L$ is also the ladder region in
Figure~\ref{fig:5}.)
The ladder can be ``described" by the inwards corners
$C_1=(4,6)$, $C_2=(7,11)$, and $C_3=(8,12)$ along the upper boundary
of~$L$, and by the inwards corners
$D_1=(6,8)$ and $D_2=(10,11)$ along the lower boundary
of~$L$.

\begin{figure}[h]
\tiny
$$
\Koordinatenachsen(14,16)(0,0)
\FeinPunkt(0,0)
\FeinPunkt(1,0)
\FeinPunkt(2,0)
\FeinPunkt(3,0)
\FeinPunkt(4,0)
\FeinPunkt(5,0)
\FeinPunkt(6,0)
%
\FeinPunkt(0,1)
\FeinPunkt(1,1)
\FeinPunkt(2,1)
\FeinPunkt(3,1)
\FeinPunkt(4,1)
\FeinPunkt(5,1)
\FeinPunkt(6,1)
%
\FeinPunkt(0,2)
\FeinPunkt(1,2)
\FeinPunkt(2,2)
\FeinPunkt(3,2)
\FeinPunkt(4,2)
\FeinPunkt(5,2)
\FeinPunkt(6,2)
%
\FeinPunkt(0,3)
\FeinPunkt(1,3)
\FeinPunkt(2,3)
\FeinPunkt(3,3)
\FeinPunkt(4,3)
\FeinPunkt(5,3)
\FeinPunkt(6,3)
%
\FeinPunkt(0,4)
\FeinPunkt(1,4)
\FeinPunkt(2,4)
\FeinPunkt(3,4)
\FeinPunkt(4,4)
\FeinPunkt(5,4)
\FeinPunkt(6,4)
%
\FeinPunkt(0,5)
\FeinPunkt(1,5)
\FeinPunkt(2,5)
\FeinPunkt(3,5)
\FeinPunkt(4,5)
\FeinPunkt(5,5)
\FeinPunkt(6,5)
%
\FeinPunkt(0,6)
\FeinPunkt(1,6)
\FeinPunkt(2,6)
\FeinPunkt(3,6)
\FeinPunkt(4,6)
\FeinPunkt(5,6)
\FeinPunkt(6,6)
%
\FeinPunkt(4,7)
\FeinPunkt(5,7)
\FeinPunkt(6,7)
%
\FeinPunkt(4,8)
\FeinPunkt(5,8)
\FeinPunkt(6,8)
\FeinPunkt(7,8)
\FeinPunkt(8,8)
\FeinPunkt(9,8)
\FeinPunkt(10,8)
%
\FeinPunkt(4,9)
\FeinPunkt(5,9)
\FeinPunkt(6,9)
\FeinPunkt(7,9)
\FeinPunkt(8,9)
\FeinPunkt(9,9)
\FeinPunkt(10,9)
%
\FeinPunkt(4,10)
\FeinPunkt(5,10)
\FeinPunkt(6,10)
\FeinPunkt(7,10)
\FeinPunkt(8,10)
\FeinPunkt(9,10)
\FeinPunkt(10,10)
%
\FeinPunkt(4,11)
\FeinPunkt(5,11)
\FeinPunkt(6,11)
\FeinPunkt(7,11)
\FeinPunkt(8,11)
\FeinPunkt(9,11)
\FeinPunkt(10,11)
\FeinPunkt(11,11)
\FeinPunkt(12,11)
\FeinPunkt(13,11)
%
\FeinPunkt(7,12)
\FeinPunkt(8,12)
\FeinPunkt(9,12)
\FeinPunkt(10,12)
\FeinPunkt(11,12)
\FeinPunkt(12,12)
\FeinPunkt(13,12)
\FeinPunkt(8,13)
\FeinPunkt(9,13)
\FeinPunkt(10,13)
\FeinPunkt(11,13)
\FeinPunkt(12,13)
\FeinPunkt(13,13)
\FeinPunkt(8,14)
\FeinPunkt(9,14)
\FeinPunkt(10,14)
\FeinPunkt(11,14)
\FeinPunkt(12,14)
\FeinPunkt(13,14)
\FeinPunkt(8,15)
\FeinPunkt(9,15)
\FeinPunkt(10,15)
\FeinPunkt(11,15)
\FeinPunkt(12,15)
\FeinPunkt(13,15)
\Kreis(0,5)
\Kreis(0,4)
\Kreis(0,2)
\Kreis(8,15)
\Kreis(10,15)
\Kreis(11,15)
\Label\lu{0}(0,0)
\Label\u{ 1}(1,0)
\Label\u{ 2}(2,0)
\Label\u{ 3}(3,0)
\Label\u{ 4}(4,0)
\Label\u{ 5}(5,0)
\Label\u{ 6}(6,0)
\Label\u{ 7}(7,0)
\Label\u{ 8}(8,0)
\Label\u{ 9}(9,0)
\Label\u{ 10}(10,0)
\Label\u{ 11}(11,0)
\Label\u{ 12}(12,0)
\Label\u{ 13}(13,0)
\Label\l{ 1}(0,1)
\Label\l{ 2}(0,2)
\Label\l{ 3}(0,3)
\Label\l{ 4}(0,4)
\Label\l{ 5}(0,5)
\Label\l{ 6}(0,6)
\Label\l{ 7}(0,7)
\Label\l{ 8}(0,8)
\Label\l{ 9}(0,9)
\Label\l{ 10}(0,10)
\Label\l{ 11}(0,11)
\Label\l{ 12}(0,12)
\Label\l{ 13}(0,13)
\Label\l{ 14}(0,14)
\Label\l{ 15}(0,15)
\normalsize
\Label\ro{\ \ A^{(1)}}(0,5)
\Label\ro{\ \ A^{(2)}}(0,4)
\Label\ro{\ \ A^{(3)}}(0,2)
\Label\u{E^{(1)}}(8,16)
\Label\u{E^{(2)}}(10,16)
\Label\u{E^{(3)}}(11,16)
\Pfad(0,5),111212222112121222\endPfad
\Pfad(0,4),111121222211212122221\endPfad
\Pfad(0,2),111121212222112121222212\endPfad
\DickPunkt(3,6)
\DickPunkt(6,11)
\DickPunkt(7,12)
\DickPunkt(4,5)
\DickPunkt(7,10)
\DickPunkt(8,11)
\DickPunkt(9,15)
\DickPunkt(4,3)
\DickPunkt(5,4)
\DickPunkt(8,9)
\DickPunkt(9,10)
\DickPunkt(10,14)
\hskip7.7cm
$$
\caption{}
\label{fig:10}
\end{figure}

We now have to compute the quantities~$t_i$, $i=1,2,3$. 
First, we have to apply the algorithm of Lemma~\ref{lem:3} with
$A=A^{(1)}=(0,5)$, $E=E^{(1)}=(8,15)$, 
$S_1=(4,6)$, $S_2=(7,11)$, $S_3=(8,12)$,
$T_1=(4,10)$, and $T_2=(8,13)$. The point set in Step~(1) is
$$
P_1=\{(0,5),\,(8,15),\,(4,6),\,(7,11),\,(8,12),\,
(4,10),\,(8,13)\}.
$$
Next, the point set in Step~(2) is
$$
P_2=\{(5,-5),\,(23,-7),\,(10,-2),\,(18,-4),\,(20,-4),\,
(14,-6),\,(21,-5)\},
$$
while, after the ordering and labelling in Step~(3), it is
$$
P_2=\{(5,-5),\,(10,-2)_S,\,(14,-6)_T,\,(18,-4)_S,\,
(20,-4)_S,\,(21,-5)_T,\,(23,-7)_{S,T}\}.
$$
The point set obtained in Step~(4) is
$$
P_3=\{(5,-5),\,(10,-2)_S,\,(14,-6)_T,\,(18,-4)_S,\,
(23,-7)_{S,T}\}.
$$
Hence, we have
$$
t_1=\frac {1} {2}\big(
\min\{8,2\}+
\min\{0,8\}+
\min\{6,2\}+
\min\{2,8\}
\big)=3.
$$
The first path in Figure~\ref{fig:10} is a path with that number of
(valid) NE-turns. In the figure, the (valid) NE-turns are indicated
as thick dots. The point $(4,10)$ is not a valid NE-turn since it
lies in the set $B^{(1)}$ (cf.\ the statement of Theorem~\ref{thm:3}).

In order to perform the same computation for obtaining~$t_2$, we 
have to apply the algorithm of Lemma~\ref{lem:3} with
$A=A^{(2)}=(0,4)$, $E=E^{(1)}=(10,15)$, 
$S_1=(5,5)$, $S_2=(8,10)$, $S_3=(9,11)$,
$T_1=(5,9)$, and $T_2=(9,12)$. We get
$$
P_3=\{
(4,-4),\,(10,0)_S,\,(14,-4)_T,\,(18,-2)_S,\,(25,-5)_{S,T}
\},
$$
so that
$$
t_2=\frac {1} {2}\big(
\min\{10,2\}+
\min\{0,8\}+
\min\{6,2\}+
\min\{4,10\}
\big)=4.
$$
The second path in Figure~\ref{fig:10} is a path with that number of
(valid) NE-turns.

Finally, in order to perform the computation for obtaining~$t_3$,
we get 
$$
P_3=\{
(2,-2),\,(10,2)_S,\,
(14,-2)_T,\,(18,0)_S,\,(26,-4)_{S,T}
\},
$$
so that
$$
t_3=\frac {1} {2}\big(
\min\{12,4\}+
\min\{0,8\}+
\min\{6,2\}+
\min\{4,12\}
\big)=5.
$$
The third path in Figure~\ref{fig:10} is a path with that number of
(valid) NE-turns.

Consequently, the $a$-invariant of $R_M(Y)$ equals
$$
(3+4+5+3+5+6+3+4+6)-(15+13+3)\cdot 3=-54.
$$
\end{Example}


\begin{thebibliography}{10}

\bibitem{AbhyAB} 
S. S. Abhyankar, {\em Enumerative
combinatorics of Young tableaux}, Marcel Dekker, New York,
Basel, 1988.  

\bibitem{AbKuAC}
S. S. Abhyankar and D. M. Kulkarni, {\em 
On Hilbertian ideals}, Linear Alg. Appl. {\bf 116} (1989), 53--76.

\bibitem{BrHeAA}
W.    Bruns and J. Herzog, {\em On the
computation of $a$-invariants}, manuscripta math. {\bf 77} (1992),
201--213.  


\bibitem{ConcAB}
A.    Conca, {\em Ladder determinantal rings}, 
J.~Pure Appl. Algebra  {\bf 98} (1995), 119--134.

\bibitem{ConcAD}
A.    Conca, {\em The $a$-invariant of determinantal ideals},
Math. J. Toyama Univ. {\bf 18} (1995), 47--63.


\bibitem{CoHeAB}
A.    Conca and J. Herzog, {\em 
Ladder determinantal rings have rational
 singularities}, Adv. Math. {\bf132} (1997), 120--147. 

\bibitem{GhorAC}
S. R. Ghorpade, {\em Abhyankar's work on 
Young tableaux and some recent developments}, in: Proc. Conf. on 
Algebraic Geometry and Its Applications (Purdue Univ., June 1990), 
Sprin\-ger--Ver\-lag, New York, 1994, pp.~215--249.

\bibitem{GhorAD}
S. R. Ghorpade, {\em Young bitableaux, 
lattice paths and Hilbert functions}, J. Statist. Plann. Inference
{\bf 54} (1996), 55--66.

\bibitem{GhorAF}
S. R. Ghorpade, {\em Hilbert functions of ladder determinantal 
varieties}, Discrete Math. {\bf 246} (2002), 131--175.

\bibitem{GoWaAA}
S. Goto and K. Watanabe, {\em On graded rings I}, J. Math. Soc.
Japan {\bf 30} (1978), 179--213.

\bibitem{GraeAA}
H. G. Gr\"abe, {\em Streckungsringe}, 
Dissertation~B, P\"adagogische Hochschule
``Dr. Theodor Neubauer", Erfurt, DDR, 1988. 

\bibitem{HeTrAA}
J.    Herzog and N. V. Trung, {\em
Gr\"obner bases and multiplicity of determinantal and Pfaffian
ideals}, Adv. Math. {\bf 96} (1992), 1--37.  

\bibitem{KnMiAA}
A.    Knutson and E. Miller, {\em Gr\"obner geometry of Schubert 
polynomials}, Ann. Math.~(2) {\bf 161} (2005), 1245--1318.

\bibitem{KrPrAA}
C.    Krattenthaler and M. Prohaska, {\em A remarkable formula for 
counting nonintersecting lattice paths in a ladder with respect to
turns}, Trans. Amer. Math. Soc. {\bf 351} (1999), 1015--1042.

\bibitem{KrRuAA}
C.    Krattenthaler and M. Rubey, {\em
A determinantal formula for the Hilbert series of 
one-sided ladder determinantal rings}, in: Algebra, Arithmetic 
and Geometry with Applications, (C.~Christensen, G.~Sundaram, 
A.~Sathaye and C.~Bajaj, eds.),
Springer--Verlag, New York, (2004), pp.~337--356.

\bibitem{KulkAD}
D. M. Kulkarni, {\em Hilbert polynomial
of a certain ladder-determinantal ideal}, J. Algebraic Combin.
{\bf 2}, (1993), 57--72. 



\bibitem{NaraAA}
H. Narasimhan, {\em The irreducibility of ladder determinantal
varieties}, J. Algebra {\bf 102} (1986), 162--185.

\bibitem{RubeAC} M. Rubey, 
{\it The $h$-vector of a ladder determinantal ring cogenerated by
  $2\times 2$ minors is log-concave},
J. Algebra {\bf 292} (2005), 303--323.

\bibitem{StanDA}
R. P. Stanley, {\em Hilbert functions of graded algebras},
Adv. Math. {\bf 28} (1978), 57--83.

\bibitem{Wang}
H.-J. Wang, {\em  A determinantal formula for the Hilbert series of determinantal rings of one-sided ladder},  J. Algebra {\bf 265} (2003),  79–-99.

\end{thebibliography}
\end{document}

Here is my proposed answer for the maximum number of turns for given
starting and end points. I assume the starting points to be $(a_1,0)$,
$(a_2,0)$, \dots, $(a_n,0)$, and the end points to be $(0,b_1)$,
$(0,b_2)$, \dots, $(0,b_n)$, $a_1<a_2<\dots<a_n$, $b_1<b_2<\dots<b_n$.
Let ${\bf a}=(a_1,a_2,\dots,a_n)$ and ${\bf b}=(b_1,b_2,\dots,b_n)$.
I believe the maximum number of NE-turns of a family of non-intersecting
lattice paths connecting these starting and end points to be
$$ \sum _{i=1} ^{n}t_i
$$
where
$$t_i:=\left\{\begin{array}{ll}b_i- (a_1-a_i+2(i-1))_+
-(b_1-a_i+2(i-1)-(a_1-a_i+2(i-1))_+)_+&a_i\ge b_i\\ 
a_i- (b_1-b_i+2(i-1))_+
-(a_1-b_i+2(i-1)-(b_1-b_i+2(i-1))_+)_+&a_i< b_i.\\ 
\end{array}\right.
$$
By $(\alpha)_+$ I mean $\alpha$ if $\alpha\ge0$ and $0$ otherwise. 
Here is an example with ${\bf a}=(3,4,5,7)$ and ${\bf b}=(2,3,8,9)$ (also
using matrix notation for the coordinates). You
have to rotate everything by $180^\circ$ to get your picture.

\begin{figure}[h]
$$
\Gitter(10,8)(0,0)
\Pfad(0,0),2111212121212112\endPfad
\Pfad(0,2),1121212121211\endPfad
\Pfad(0,3),1221212\endPfad
\Pfad(0,4),22121\endPfad
\DickPunkt(0,0)
\DickPunkt(0,2)
\DickPunkt(0,3)
\DickPunkt(0,4)
\DickPunkt(2,7)
\DickPunkt(3,7)
\DickPunkt(8,7)
\DickPunkt(9,7)
\Kreis(0,1)
\Kreis(3,2)
\Kreis(4,3)
\Kreis(5,4)
\Kreis(6,5)
\Kreis(7,6)
\Kreis(2,3)
\Kreis(3,4)
\Kreis(4,5)
\Kreis(5,6)
\Kreis(6,7)
\Kreis(1,5)
\Kreis(2,6)
\Kreis(0,6)
\Kreis(1,7)
\PfadDicke{.4pt}
\Pfad(0,0),222222211111111\endPfad
\hskip6cm
$$
\caption{}
\label{fig:3}
\end{figure}

\begin{align*}
t_1&=\min\!\big\{3,2,3+2-0\big\}=2,\\
t_2&=\min\!\big\{4,3,4+3-\max\{3+2,2+2\}\big\}=2,\\
t_3&=\min\!\big\{5,8,5+8-
\max\{3+4,2+4,4+2,3+2\}\big\}=5,\\
t_4&=\min\!\big\{7,9,7+9-
\max\{3+6,2+6,
4+4,3+4,
5+2,8+2\}\big\}=6.
\end{align*}

$$\Gitter(6,5)(0,0)
\DickPunkt(1,4)
\DickPunkt(2,4)
\DickPunkt(3,4)
\DickPunkt(5,4)
\Pfad(0,3),21\endPfad
\Pfad(1,2),212\endPfad
\Pfad(2,1),2122\endPfad
\Pfad(3,0),212221\endPfad
\Kreis(0,4)
\Kreis(1,3)
\Kreis(2,2)
\Kreis(3,1)
\Kreis(4,4)
\hskip3cm
$$

